\date{\today}
\title{The Mixing method: low-rank coordinate descent for semidefinite programming with diagonal constraints}
\newtheorem{assumption}[theorem]{Assumption}
\newtheorem{claim}[theorem]{Claim}
\newcommand{\gr} {\nabla}
\newcommand{\norm}[1] {\|#1\|}
\newcommand{\mdot}[2]{\langle#1,#2\rangle}
\newcommand{\appinf} {\rightarrow \infty}
\newcommand{\no}{{\nonumber}}
\def\bR{{\mathbb{R}}}
\def\bC{{\mathbb{C}}}
\DeclareMathOperator*{\minimize}{minimize}
\DeclareMathOperator*{\maximize}{maximize}
\DeclareMathOperator{\tr}{tr}
\DeclareMathOperator{\diag}{diag}
\DeclareMathOperator{\vect}{vect}
\DeclareMathOperator{\tvec}{vec}
\DeclareMathOperator{\minnz}{\textnormal{min-nz}}
\def\subjectto{\mbox{subject to}}
\newcommand{\shortpar}{{\mkern3mu\vphantom{\perp}\vrule depth 0pt\mkern2mu\vrule depth 0pt\mkern3mu}}
\newenvironment{proof*}{\par\noindent{\bf Proof\ }}{}
\def\R{{\mathbb R}}
\def\minnz{{{\min}{\textnormal{-nz}}}}
\begin{document}

\author{\name Po-Wei Wang 
  \email poweiw@cs.cmu.edu \\
  \addr Machine Learning Department\\
  Carnegie Mellon University\\
  Pittsburgh, PA 15213 
  \AND
  \name Wei-Cheng Chang 
  \email wchang2@cs.cmu.edu \\
  \addr Language Technologies Institute\\
  Carnegie Mellon University\\
  Pittsburgh, PA 15213 
  \AND
  \name J. Zico Kolter
  \email zkolter@cs.cmu.edu \\
  \addr Computer Science Department\\
  Carnegie Mellon University\\
  Pittsburgh, PA 15213, and\\
  \addr Bosch Center for Artificial Intelligence\\
  Pittsburgh, PA 15222
  %\addr Department of Computer Science\\
  %\addr National Taiwan University\\
  %\addr Taipei 106, Taiwan
}

\editor{}

\maketitle

\begin{abstract} In this paper, we propose a low-rank coordinate descent
approach to structured semidefinite programming with diagonal constraints.  The
approach, which we call the Mixing method, is extremely simple to implement, has
no free parameters, and typically attains an order of magnitude or better
improvement in optimization performance over the current state of the art.  
We show that the method is strictly decreasing, will converge to a critical point,
and further that for sufficient rank all non-optimal critical points are unstable. 
Moreover, we prove that with a step size, the Mixing method 
converges to the global optimum of the semidefinite program almost surely
in a locally linear rate under random initialization.
This is the first low-rank semidefinite programming method that
has been shown to achieve a global optimum on the spherical manifold without assumption. 
We apply our algorithm to two related domains:
solving the maximum cut semidefinite relaxation, and solving a maximum
satisfiability relaxation (we also briefly consider additional applications such
as learning word embeddings).  In all settings, we demonstrate substantial improvement over
the existing state of the art along various dimensions, and in total, this work
expands the scope and scale of problems that can be solved using
semidefinite programming methods.

\end{abstract}

\begin{keywords}
semidefinite program, non-convex optimization, global convergence
\end{keywords}

\section{Introduction}
This paper considers the solution of large-scale, structured semidefinite
programming problems (SDPs).
A generic SDP can be written as the optimization problem
\begin{equation}
    \minimize_{X\succeq 0} \;\; \mdot{C}{X}, \;\; \subjectto \;\; \mdot{A_i}{X} = b_i, \;\; i=1\ldots p \\
  \label{eq:sdp}
\end{equation}
where $X \in \mathbb{S}^n$ is the optimization variable (a symmetric $n \times
n$ matrix), and $A_i \in \mathbb{R}^{n \times n}, b_i \in \mathbb{R}$,
$i=1,\ldots,p$ are problem data.
Semidefinite programs can encode a huge range of practical problems,
including relaxations of many combinatorial optimization tasks \citep{boyd2004convex}, approximate
probabilistic inference \citep{jordan2004semidefinite}, metric learning
\citep{yang2006distance}, matrix completion \citep{candes2012exact}, and many others.  
Unfortunately, generic semidefinite programs involve optimizing a
\emph{matrix-valued} variable $X$, 
where the number of
variables grows quadratically so that
it quickly becomes unaffordable for solvers
employing exact methods such as primal-dual interior point algorithms.

Fortunately, a property
of these problems, which has been recognized for some time now, is that the
solution to such problems is often \emph{low-rank}; specifically, the problem
always admits an optimal solution with at most rank $\lceil \sqrt{2p} \rceil$
\citep{barvinok1995problems,pataki1998rank}, and many 
SDPs are set up to have even lower rank solutions in practice.  This has
motivated the development of non-convex low-rank solvers for these systems: that
is, we can attempt to solve the equivalent (but now non-convex) formulation of
the problem
\begin{equation*}
    \minimize_{V \in \mathbb{R}^{k \times n}} \;\; \mdot{C}{V^T V}, \;\;\subjectto \;\; \mdot{A_i}{V^T V} = b_i, \;\; i=1\ldots p 
\end{equation*}
with the optimization variable $V \in \mathbb{R}^{k \times n}$.
Here we are explicitly representing $X$ by the matrix $V$ of rank $k$ (typically with $k
\ll n$), $X= V^TV$. Note that because we are representing $X$ in this way, we
no longer need to explicitly enforce semidefiniteness, as it is implied by the
change of variables.
In a long series of work dating back several years, it has been shown that,
somewhat surprisingly, this change to a non-convex problem does not cause as
many difficulties as might be thought: in practice, local solutions to the
problem tend to recover the optimal solution \citep{burer2003nonlinear}; 
assuming sufficient rank $k$,
all second order local optima of the problem are also
global optima \citep{boumal2016non}; and it even holds
that approximated local optima also have good approximation properties 
\citep{mei2017solving} for convex relaxations of some combinatorial problems.
Despite these advances, solving for $V$ remains a practical challenge. Traditional methods for handling non-linear equality constraints, such as
augmented Lagrangian methods and Riemannian manifold methods, suffer
from slow convergence, difficulties in selecting step size, or other deficiencies.

In this paper, we present a low-rank coordinate descent  approach to solving
SDPs that have the additional specific structure of constraints (only) on
the \emph{diagonal} entries of the matrix (we consider the case of unit diagonal
constraints, though we can easily extend to arbitrary positive numbers)
\begin{equation} \minimize_{X \succeq 0}\; \mdot{C}{X}, \;\; \subjectto\;\;X_{ii}=1,\;\;i=1\ldots n.
\label{eq:unitsdp}
\end{equation}
This is clearly a very special case of the full semidefinite program, but it
also captures some fundamental problems, such as the famous semidefinite
relaxation of the maximum cut (MAXCUT) combinatorial optimization problem;
indeed, the MAXCUT relaxation will be one of the primary applications in this
paper.  In this setting, if we consider the above low-rank form of the problem,
we show that we can derive the coordinate descent
updates in a very simple closed form, resulting in an algorithm several times
faster than the existing state of the art.  We call our approach the Mixing
method, since the updates have a natural interpretation in terms of giving each
$v_i$ as a mixture of the remaining $v_j$ terms.  We will also show, however,
that the method can be applied to other problems as well, such as a (novel, to
the best of our knowledge) relaxation of the MAXSAT problem.

On the theoretical side, we prove several strong properties about the Mixing
method.  Most notably, despite the fact that the method is solving a non-convex
formulation of the original MAXCUT SDP, it nonetheless will converge to the true
\emph{global} optimum of the original SDP problem, provided the rank is
sufficient, $k > \sqrt{2n}$ (which is of course much smaller than optimizing
over the entire $n^2$ variables of the original SDP).  We prove this by first
showing that the method is strictly decreasing, and always converges to a
first-order critical point.   We then show that all non-optimal critical points 
(that is, all points which are critical point in the non-convex formulation in
$V$, but not optimal in $X$ in the original SDP), are \emph{unstable},
i.e., they are saddle points in $V$ and will be unstable under updates of
the Mixing method; this means that, in practice, the Mixing method is extremely
unlikely to converge to any non-optimal solution. However, to formally prove
that the method \emph{will} converge to the global optimum, we consider a
slightly modified ``step size" version of the algorithm, for which we can prove
formally that the method indeed converges to the global optimum in all cases
(although in practice such a step size is not needed).  Finally, for both the
traditional and step size versions of the algorithm, we show that the Mixing
method attains \emph{locally linear} convergence around the global optimum.  The
primary tools we use for these proofs require analyzing the spectrum of the
Jacobian of the Mixing method at non-optimal critical points, showing that it is always
unstable around those points; we combine these with a geometric analysis
of critical points due to \cite{boumal2016non} and a convergence proof for
coordinate gradient descent due to \cite{lee2017first} to reach our main result 
(both results require slight specialization for our case, so are included for
completeness, but the overall thrust of those supporting points are due to
these past papers). 

\subparagraph{Contributions of the current work.} In summary, the main
contributions of this work are: 1) We propose a low-rank
coordinate descent method, the Mixing method, for the diagonally constrained
SDP problem, which is extremely fast and simple to implement. 2) We prove that
despite its non-convex formulation, the method is guaranteed to converge to
global optimum of the original SDP with local linear convergence, provided that we use a small 
rank $k > \sqrt{2n}$. 3) We evaluate the proposed method on the MAXCUT SDP
relaxation, showing that it is 10-100x times faster than the other
state-of-the-art solvers and scales to millions of
variables.  4) We extend the MAX-2SAT relaxation of \citet{goemans1995improved}
to general MAXSAT problems, showing that the proposed formulation can be solved
by the Mixing method in linear time to the number of literals. Further,
experiments show that the proposed method has much better approximation ratios
than other approximation algorithms, and is even comparable to the best partial
MAXSAT solvers in some instances. 

\section{Background and related work}

\paragraph{Low-rank methods for semidefinite programming}
Given an SDP problem with $p$ constraints, it was proven by
\citet{barvinok1995problems,pataki1998rank} 
that, if the problem is solvable, it admits solutions of rank $k = \lceil\sqrt{2p}\rceil$.
That is, we have solutions satisfying $X=V^T V$, such that $V\in\bR^{k\times n}$.
Thus, if we can solve the problem in the space of $V$, we can ignore the
semidefinite constraint and also have many fewer variables. 
The idea of using this low-rank structure during optimization was first
proposed by \citet{burer2003nonlinear} in their solver SDPLR, in which they solve
the low-rank problem with L-BFGS on the extended Lagrangian problem. Since then,
many low-rank optimization algorithms have been developed. One of the most
notable is the Riemannian trust region 
method introduced by \citet{absil2009optimization}. They considered the
Riemannian manifold of low-rank structures, and extended the non-linear
conjugate gradient method to work on the manifold. Later, \citet{boumal2015low}
improved the method by including preconditioned CG; these methods are
implemented in the popular Manopt package \citep{boumal2014manopt}. 

Somewhat surprisingly, all the above low-rank methods are observed to converge to
a global optimum in practice
\citep{burer2003nonlinear,absil2009optimization,boumal2014manopt}. However, to
the best of the authors' knowledge, there is not yet a general
proof on convergence to the globally optimal solution without strong
assumptions. 
\citet[Theorem 7.4.2]{absil2009optimization} proved that their method converges
to the critical point under a sufficient decrease condition, and super-linear
convergence near an isolated local minimizer when the Riemannian Hessian is
positive definite. Nonetheless, the results do not apply to the linear objective in
our problem. 
\citet[Theorem 2]{boumal2016non} proved that, for sufficiently large
$k$, all second-order optimal solutions are globally optimal for almost all cost
matrices $C$. However, this alone does not imply $f$ converges to $f^*$.
\citet{sato2015new} proved the convergence to a critical point without
assumptions for a modified Riemannian conjugate gradient method, and
\citet[Theorem 12 and Proposition 19]{boumal2016global} proved that the Riemannian trust region method converges
to a solution with Hessian larger than $-\gamma I$ in $O(1/\gamma^3)$
and provide bounds on $f-f^*$ when $k>n$.
Recently, \citet{bhojanapalli2018smoothed} proved the connection between
$\gamma$, the norm of gradients, and $f_\mu-f_\mu^*$ for the unconstrained penalty form $f_\mu$, but
this does not state the relationship between $f-f^*$ after projecting the unconstrained solution back to the feasible space.

Other existing results on global convergence to the optimal on low-rank problems do not apply to
our problem setting. For example, \citet{bhojanapalli2016dropping} proved that
under a certain spectral initialization, the gradient descent method converges to
the global optima for unconstrained low-rank SDP. \citet{park2016provable}
further proved that the method works for norm-constrained low-rank SDP problems
when the feasible space of $V$ is convex. \citet{lee2016gradient,lee2017first,o2017behavior} proved that,
under random initialization, first-order methods converge to local
minimizers for unconstrained problems. 
However, \citet{o2017behavior} only concerns the unconstrained optimization in Euclidean space,
and results in \citet{lee2016gradient,lee2017first} do not work in the spherical manifold
due to the singularity in the Jacobian (even with step size).
Because of these issues, certifying global convergence for low-rank SDP methods
typically requires a two-stage algorithm, where one 
iteratively solves the low-rank problem via some ``inner'' optimization, then checks
for a certificate of (global) optimality via the dual objective of the original
SDP and inflates the rank if the solution is not optimal (i.e., the ``outer'' 
iteration) \citep{burer2003nonlinear}. Even here, a globally optimal
solution is not theoretically guaranteed to be achieved with reduced rank in all
cases, but at least can be verified after the fact. Computing this dual
solution, however, typically requires solving an eigenvalue problem of the size of
the original $X$ matrix, so is often avoided entirely in practice in favor of
just solving one 
low-rank problem, and this is the setting we consider here.  

\paragraph{Approximation algorithms for MAXCUT and MAXSAT}
Semidefinite programming has many applications in approximation algorithms for
NP-complete problems. 
In particular, \citet{goemans1995improved} proposed a classical SDP relaxation 
on the MAXCUT and MAX-2SAT problems, which has a 0.878 approximation guarantee.
Experiments on MAX-2SAT \citep{gomes2006power} show that the SDP upper bound
and lower bound are much tighter than the classical linear programming
relaxation for MAXSAT \citep{goemans1994new}. 
While traditionally SDPs are more expensive to solve than linear
programming, we will show here that with our approach, SDP relaxations
can achieve substantially better results than linear programming in less time.

\section{The Mixing method}\label{sec:Mixing}

As mentioned above, the goal of the Mixing method is to solve the semidefinite
program \eqref{eq:unitsdp} with a unit diagonal constraint.  
As discussed, we can replace the $X\succeq 0$ constraint with
$X=V^T V$ for some $V\in\bR^{k\times n}$; when we do this, the constraint 
$X_{ii}=1$ translates to the constraint $\norm{v_i}=1$, where $v_i$ is the $i$th
column of $V$.  This leads to the equivalent (non-convex) problem  on the spherical manifold
\begin{equation}
  \label{eq:vec-programming}
  \minimize_{V\in\bR^{k\times n}}\; \mdot{C}{V^T
V}\quad\subjectto\; \norm{v_i} = 1,\;i=1\ldots n.
\end{equation}
Although the problem is non-convex, it is known
\citep{barvinok1995problems,pataki1998rank} that when $k>\sqrt{2n}$, the
optimal solution for $V\in\bR^{k\times n}$ can recover the optimal solution for $X$.

We consider solving the problem \eqref{eq:vec-programming} via
a coordinate descent method.  The resulting algorithm is extremely simple to
implement but, as we will show, it performs substantially better than existing
approaches for the semidefinite problems of interest.
Specifically, the objective terms that depend on $v_i$ are given by
$v_i^T(\sum_{j=1}^n c_{ij} v_j)$.  However, because 
$\norm{v_i}=1$ we can assume that $c_{ii}=0$ without affecting the solution of
the optimization problem.  Thus, the problem is equivalent to simply minimizing
the inner product $v_i^Tg_i$ (where $g_i=\sum_{j=1}^n c_{ij}v_j$), subject to the
constraint that $\norm{v_i}=1$; this problem has a closed form solution,
given by $v_i = - g_i / \norm{g_i}$ when $\norm{g_i}\neq 0$,
and no update otherwise.
Put in terms of the original $v_j$ variable,
the update is simply
\begin{equation*} v_i := \text{normalize}\left(-\sum_{j=1}^n
        c_{ij} v_j\right)\text{ if the norm is non-zero.}
\end{equation*}

This way, we can initialize $v_i$ on the unit sphere and perform
cyclic updates over all the $i=1\ldots n$ in closed-form. We call this the
Mixing method, because for each $v_i$ our update mixes and normalizes the remaining
vectors $v_j$ according to weight $c_{ij}$. In the case of sparse $C$
(which is common for any large data problem), the time
complexity for updating all variables once is $O(k \cdot m)$, where $k$ is 
the rank of $V$ and $m$ is the number of nonzeros in $C$. 
This is significantly cheaper than the interior point method, 
which typically admits complexities cubic in $n$.
However, the details for efficient
computation differ depending on the precise nature
of the SDP, so we will describe these in more detail in the subsequent
application sections. A complete description of the generic algorithm
is shown in Algorithm~\ref{alg:mixing}.

\begin{algorithm}[t]
        \begin{algorithmic}[1]
        \State Initialize $v_i$ randomly on a unit sphere\;
        \While{not yet converged}
                \For{$i=1,\ldots,n$}
                \State \textbf{if} $\norm{\sum_{j=1}^n c_{ij}v_j}\neq 0$ \textbf{then} $v_i := \textnormal{normalize}(-\sum_{j=1}^n c_{ij}v_j)$
                \EndFor
        \EndWhile
        \end{algorithmic}
        \caption{The Mixing method for MAXCUT problem}\label{alg:mixing}
\end{algorithm}

\subsection{Convergence properties of the Mixing methods}

This section presents the theoretical analysis of the Mixing methods, which
constitutes four main properties:
\begin{itemize}
    \item We prove that the Mixing methods are strictly decreasing in objective value
            and always converge to first-order critical points in the limit.
    \item Next, we establish the linear convergence for the Mixing methods when the iterate is close enough to a critical point, \emph{regardless of the rank}. Together with the above limit property, it means that the Mixing methods converge to a critical point in a asymptotic linear rate.
    \item Further, we prove that 
        for a rank\footnote{The tightness of the $\sqrt{2n}$ rank (actually, rank satisfying $k(k+1)/2 \geq n$)
        is proved in \cite{barvinok2001remark}.} $k>\sqrt{2n}$, 
                all non-optimal critical points $V\in\bR^{k\times n}$ are unstable for the Mixing method.
        That is, when $V^T V$ is non-optimal for the convex problem \eqref{eq:unitsdp},
                the critical point $V$ will sit on a saddle of the non-convex problem \eqref{eq:vec-programming} and the Mixing method tends to \emph{diverge} from the point \emph{locally}. 
        \item Finally, to rigorously prove the \emph{global convergence}, we show that a variant of the Mixing method with a proper step size
            converges to a global optimum almost surely under random initialization for almost every cost matrix $C$, \emph{without any assumptions}.
\end{itemize}
In total, our method represents the 
\emph{first low-rank semidefinite programming method which will
provably converge to a global optimum under constraints, and further in a locally linear rate.}

\subparagraph{Convergence to critical points.}  Our first property shows that
the Mixing method strictly decreases, and always converges to a first-order
critical point for any $k$.  This is a relatively weak statement, but useful in the context
of the further proofs.
\begin{theorem} \label{thm:critical mixing}
The Mixing method on the SDP problem \eqref{eq:unitsdp} is strictly decreasing
  and converges to first-order critical points, with step size (no assumption) and without step size (with Assumption~\ref{assume:degenerate}).
\end{theorem}
For the proof on the mixing method without step size, we introduce an additional assumption on the non-degeneracy for its normalization.
\begin{assumption} \label{assume:degenerate}
Assume that for all $i=1\ldots n$, $\norm{\sum_{j=1}^n c_{ij}v_j}$ do not
degenerate in the procedure. That is, all norms are always greater than or equal to a constant
$\delta>0$. 
\end{assumption}
In practice, the degeneracy is never observed.
The proof of Theorem~\ref{thm:critical mixing} is in Appendix~\ref{sec:critical}, which mainly involves 
setting up the Mixing iteration in a matrix form, and showing that the
difference in objective value between two iterations is given by a particular
positive term based upon this form.

\subparagraph{Locally linear convergence.}
Next, we show that the convergence of the Mixing
methods exhibits linear convergence to the global optimum whenever the solution is close enough,
regardless of the rank and the existence of nearby non-optimal critical points, 
for both versions with or without the step size.
This also echoes practical experience,
where the Mixing method does exhibit this rate of convergence.
\begin{theorem}\label{thm:Linear mixing} The Mixing methods converge linearly to the global optimum when close enough to the solution,
        with step size (no assumption) or without step size (under Assumption~\ref{assume:degenerate}).
\end{theorem}

The full proof is provided in Appendix~\ref{sec:Linear}.  We prove it from the Lipschitz smoothness (Lipschitz continuous gradient) of the
Mixing mappings.  The main difficulty here is that the corresponding linear system in the Gauss-Seidel method, $S^*\in\bR^{n\times n}$,
is semidefinite so that 
the corresponding Jacobian $J_{GS}$ contains eigenvectors of magnitude $1$ in the null space of $S^*$. 
We overcome the difficulty by proving the result directly in the function value
space like \cite{wang2014iteration} so that the eigenvectors in $\textnormal{null}(S^*)$  can be ignored.
This is the first local linear convergence on the spherical manifold without assumption.

\subparagraph{Instability of non-optimal critical points.} Next we prove the main
result of our approach, that not only is the function decreasing, but that every
non-optimal critical point is unstable; that is, although the problem \eqref{eq:vec-programming} is non-convex,
the algorithm tends to diverge (locally) from any solution that is not globally optimal.
Further, the local divergence from non-optimal critical points 
and the global convergence to critical points hint that the Mixing
method can only converges to global optimal solutions.

\begin{theorem} \label{thm:unstable}
        Pick $k>\sqrt{2n}$. For
        almost all $C$, all non-optimal first-order critical points are unstable
        fixed points for the Mixing method.
\end{theorem}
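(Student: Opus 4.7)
The plan is to linearize the Mixing map $T$ at any non-optimal first-order critical point $V^*$ and show that the Jacobian $DT(V^*)$ has spectral radius strictly greater than one, which renders $V^*$ unstable as a fixed point of the discrete iteration.

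First I characterize the fixed points. Under Assumption~\ref{assume:degenerate} the scalars $\lambda_i := \|g_i^*\|$ are strictly positive, and the fixed-point equation $v_i^* = -g_i^*/\lambda_i$ coincides with the Lagrangian stationarity condition $g_i^* + \lambda_i v_i^* = 0$ for \eqref{eq:vec-programming}. Setting $\Lambda := \diag(\lambda_1,\ldots,\lambda_n)$ and $S := C + \Lambda$, this reads $SV^{*\T} = 0$. Invoking \citet{boumal2016non}, for $k > \sqrt{2n}$ and almost every $C$, every second-order critical point of \eqref{eq:vec-programming} is globally optimal; so at a non-optimal $V^*$ there must exist a tangent direction $U = [u_1,\ldots,u_n]$ with $u_i \perp v_i^*$ along which the Riemannian Hessian is negative, equivalently $\langle U, SU\rangle < 0$.

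Next I compute $DT(V^*)$. Since $c_{ii} = 0$, the single-coordinate map $v_i \mapsto -g_i/\|g_i\|$ has derivative $-\lambda_i^{-1} c_{ij} P_i$ in the $v_j$-direction ($j\neq i$), where $P_i := I - v_i^* v_i^{*\T}$ is the tangent projector at $v_i^*$. Cyclic composition over $i=1,\ldots,n$ produces a Gauss-Seidel-type operator: writing $C = \mathcal{L} + \mathcal{L}^\T$ with $\mathcal{L}$ strictly lower triangular, the eigenvalue problem $DT(V^*)\,u = \mu u$ restricted to tangent perturbations reduces to
\[
\bigl(\mu(\Lambda + \mathcal{L}) + \mathcal{L}^\T\bigr)\,u \;=\; 0
\]
(the projectors $P_i$ act as the identity on tangent vectors). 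Taking inner products of this relation with $u$, combining with the complex conjugate, and using $\Lambda \succ 0$, a short algebraic manipulation yields the key equivalence $|\mu|^2 > 1 \iff u^* S u < 0$ for every eigenpair $(\mu, u)$ of the Jacobian.

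The hard part will be upgrading this per-eigenvector equivalence to an unconditional spectral-radius bound, since the negative-curvature direction $U$ furnished by Boumal's theorem is not a priori an eigenvector of the Jacobian. I would handle this either via a Rayleigh-quotient / minimax argument showing that indefiniteness of $S$ on the tangent space forces at least one eigenvalue of $DT(V^*)$ outside the unit disc, or by using the ``almost every $C$'' hypothesis to guarantee $DT(V^*)$ is diagonalizable with eigenvectors spanning the tangent space, so that some eigenvector must witness the negative-curvature direction supplied by Boumal. Either route gives $\rho(DT(V^*)) > 1$, which is exactly the definition of an unstable fixed point for the iteration and completes the proof.
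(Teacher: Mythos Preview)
Your linearization and the eigenvalue identity $|\mu|^2>1 \iff u^*Su<0$ for eigenpairs of the \emph{un-projected} Gauss--Seidel matrix $-(\Lambda+\mathcal L)^{-1}\mathcal L^\T$ are both correct. The proposal breaks, however, at the step where you claim ``the projectors $P_i$ act as the identity on tangent vectors.'' In the Jacobian the block $P_i=I-v_i^*v_i^{*\T}$ is applied not to $u_i$ but to the mixed quantity $\sum_{j<i}c_{ij}\hat u_j+\sum_{j>i}c_{ij}u_j$; the summands $u_j$ satisfy $u_j\perp v_j^*$, not $u_j\perp v_i^*$, so $P_i$ genuinely kills a component and the reduction to the scalar recursion $(\mu(\Lambda+\mathcal L)+\mathcal L^\T)u=0$ does not hold on the full tangent space. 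A two-by-two example with $v_1^*=e_1$, $v_2^*=e_2$ already shows $P_1$ annihilating the tangent input $u_2$.

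The paper's remedy is to use a \emph{different} consequence of \citet{boumal2016non}: for $k(k+1)/2>n$ and almost every $C$, every critical $V^*$ has $\mathrm{rank}(V^*)<k$, so there is a single vector $r\in\bR^k$ with $r\perp v_i^*$ for \emph{all} $i$. On the $n$-dimensional subspace of perturbations $\{qr^\T:q\in\bC^n\}$ every $P_i$ really is the identity, and the Jacobian restricts exactly to $J_{GS}=-(\Lambda+\mathcal L)^{-1}\mathcal L^\T$ acting on $q$. This is the Overlapping-Eigenvalues lemma (Lemma~\ref{lemma:diffproj}); it gives $\sigma(J_{GS})\subset\sigma(DT(V^*))$ outright, which is strictly stronger than what your tangent-space restriction would yield even if it were valid.

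Your second gap is also real and is not closed by either of your two suggestions. The equivalence $|\mu|^2>1\iff u^*Su<0$ is a statement about eigenvectors of $J_{GS}$, which are neither orthogonal nor guaranteed to span $\bC^n$; knowing only that $S$ is indefinite does not force any particular eigenvector to land in the negative cone, so a Rayleigh/minimax argument does not go through, and generic diagonalizability of $J_{GS}$ (even if you could arrange it) would still not align its eigenbasis with the negative eigenspace of $S$. The paper proves $\rho(J_{GS})>1$ when $S\not\succeq0$ by an entirely different route (Lemma~\ref{lemma:GSspectral}): run Gauss--Seidel on $x^\T Sx$ from an initial point with negative value, argue the iterates cannot converge, bound the resulting exponential blow-up of $f$, and extract $\rho>1$ via Gelfand's formula. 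Finally, the link ``non-optimal $\Rightarrow S\not\succeq0$'' is obtained from SDP duality (Lemma~\ref{lemma:obvious}), not from the second-order Boumal result you invoke.
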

The full proof is provided in Section~\ref{sec:unstable}.  The main idea is to
show that the maximum
eigenvalue of the dynamics Jacobian, evaluated at a critical point but when $V$
is not optimal, is guaranteed to
have a spectral radius (magnitude of the largest eigenvalue) greater than
one.  We do this by showing that the Jacobian of the Mixing method has the same structure 
as the Jacobian of a standard Gauss-Seidel update plus an additional
projection matrix.  By a property from \cite{boumal2016non}, plus an
analysis of the eigenvector of Kronecker products, we can then guarantee that the
eigenvalues of the Mixing method Jacobian contains those of the
Gauss-Seidel update.  We then use an argument based upon \cite{lee2017first} to show
that the Gauss-Seidel Jacobian is similarly unstable around non-optimal critical
points, proving our main theorem.

\subparagraph{Globally optimal convergence of Mixing method with a step size.} 
Though the above two theorems in practice ensure convergence of the Mixing
method to a global optimum, because the method makes discrete updates, there is
the theoretical possibility that the method will ``jump" directly to a
non-optimal critical point (yet again, this has never been observed
in practice).  For completeness, however, in the next theorem we highlight the
fact that a version of the Mixing method that is modified with a step size 
\emph{will} always converge to a global optimum.

\begin{theorem} \label{thm:globalConvStepsize} Consider the Mixing method with a
        step size $\theta>0$. That is,
        \begin{equation*}
                v_i :=
                \textnormal{normalize}\left(v_i-\theta\sum_{j=1}^n
                c_{ij}v_j\right),\;\text{for }i=1,\ldots,n.
        \end{equation*}
        Take $k>\sqrt{2n}$ and $\theta\in(0,\frac{1}{\max_i\norm{c_i}_1})$, where $\norm{\cdot}_1$ denotes the $1$-norm.
	Then for almost every $C$, the method 
        converges to a global optimum almost surely 
        under random initialization.\footnote{Any distribution will suffice if it maps zero volume in the spherical manifold to zero probability. 
        For example, both spherical Gaussian distribution and normalized uniform distribution work.}
\end{theorem}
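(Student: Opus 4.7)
The plan is to combine the critical-point convergence from Theorem~\ref{thm:critical}, the instability of non-optimal critical points from Theorem~\ref{thm:unstable}, and a center-stable manifold argument in the spirit of \citet{lee2017first}, adapted to the product of spheres $(S^{k-1})^n$. The step size only serves to make the dynamics well-defined, smooth, and a local diffeomorphism — three ingredients that were not automatic for the plain Mixing method.

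First I would verify that the step size $\theta < 1/\max_i\norm{c_i}_1$ enforces Assumption~\ref{assume:degenerate} uniformly along the trajectory. Since each $\norm{v_j}=1$, the pre-normalization vector satisfies
\[
\Bigl\|v_i - \theta\sum_{j=1}^n c_{ij}v_j\Bigr\| \;\geq\; 1 - \theta\,\norm{c_i}_1 \;>\; 0.
\]
This makes the update map $T_\theta:(S^{k-1})^n\to(S^{k-1})^n$ globally defined and $C^\infty$. A step-size version of the descent argument behind Theorem~\ref{thm:critical} (repeating the matrix-form computation with the perturbed update) then yields strict monotonic decrease of $\mdot{C}{V^\T V}$ and convergence of the iterates to a first-order critical point of \eqref{eq:vec-programming}.

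Second I would apply Theorem~\ref{thm:unstable} with the hypothesis $k>\sqrt{2n}$ to conclude that, for almost every $C$, every non-optimal critical point is a fixed point of $T_\theta$ at which the Jacobian has spectral radius strictly greater than one. Combined with the finiteness (or at worst countability with isolated accumulation only at the optimum) of non-optimal critical values guaranteed by the generic-$C$ assumption, this puts us in the setting of the center-stable manifold theorem: around each such unstable fixed point $V^\star$ the local stable set is a $C^1$ embedded submanifold of $(S^{k-1})^n$ of dimension strictly less than $n(k-1)$, hence of zero Lebesgue (and Haar) measure. Pulling this measure-zero set back through countably many iterates of $T_\theta$ preserves measure zero \emph{provided} $T_\theta$ is a local diffeomorphism everywhere, and then a random initialization lands in the complement with probability one. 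Since the iterates must converge to \emph{some} critical point and cannot converge to any non-optimal one, the limit is globally optimal.

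The main obstacle is establishing the local diffeomorphism property of $T_\theta$ on the product of spheres. One sweep factors as a composition of coordinate maps $v_i\mapsto \Pi(v_i - \theta\sum_j c_{ij}v_j)$, where $\Pi(\cdot)=(\cdot)/\norm{\cdot}$. Smoothness is immediate from the lower bound above. For invertibility, I would linearise: on the tangent space $T_{v_i}S^{k-1}$ the differential of this coordinate map is $(I-v_i^+ v_i^{+\T})(I-\theta c_{ii}I)/\norm{v_i - \theta\sum_j c_{ij}v_j}$ restricted to $T_{v_i}S^{k-1}$, which is an $O(\theta)$ perturbation of the identity (using $c_{ii}=0$ after the standard reduction) and hence nonsingular for $\theta$ in the given range; the across-coordinate blocks contribute only triangular couplings, so the full Jacobian of the sweep is a product of invertible lower/upper-triangular-block factors. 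Together with Sard's theorem and the same real-algebraic argument used in Theorem~\ref{thm:unstable}, this yields the ``almost every $C$'' conclusion and completes the proof.
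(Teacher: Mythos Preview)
Your overall architecture matches the paper's, but there are two genuine gaps.

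\textbf{You cannot invoke Theorem~\ref{thm:unstable} for the step-size map.} Theorem~\ref{thm:unstable} is a statement about the Jacobian of the plain Mixing map $M$; it says nothing about $T_\theta=M_\theta$. While the fixed points coincide (both are the first-order critical points), the Jacobians do not: at a critical point,
\[
J_{M_\theta}(V)=(D_y+\theta L)^{-1}\otimes I_k\;P\;(I_n-\theta L^T)\otimes I_k,
\]
and the overlapping-eigenvalue argument (Lemma~\ref{lemma:diffproj}) reduces its spectral radius to that of $J_{CGD}=(D_y+\theta L)^{-1}(I_n-\theta L^T)$, the Jacobian of \emph{coordinate gradient descent} on the linear system with matrix $S=C+D_y$, not the Gauss-Seidel matrix $J_{GS}=-(L+D_y)^{-1}L^T$ used in Theorem~\ref{thm:unstable}. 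The paper obtains $\rho(J_{CGD})>1$ when $S\not\succeq 0$ by citing \citet[Proposition 5]{lee2017first}; you need this separate ingredient, and it does not follow from Theorem~\ref{thm:unstable}.

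\textbf{The finiteness/countability claim for non-optimal critical points is false.} Because the objective is invariant under the $O(k)$ action $V\mapsto RV$, critical points come in positive-dimensional orbits; they are neither finite nor countable, and do not ``accumulate only at the optimum.'' The measure-zero conclusion for the global stable set therefore cannot be obtained from a union over countably many fixed points. The paper handles this by appealing to the Lindel\"of-based argument of \citet{panageas2016gradient} (see the footnote in the proof of Theorem~\ref{thm:globalConvStepsize}); you need that, or an equivalent second-countability argument, after establishing that $M_\theta$ is a diffeomorphism.

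On the diffeomorphism itself: your linearization idea on the manifold is salvageable, but the formula you wrote is not the differential of $v\mapsto (v-g)/\norm{v-g}$ with $g$ fixed (in particular $c_{ii}$ plays no role since it is set to zero). The correct tangent map is $\xi\mapsto \norm{v-g}^{-1}(I-\hat v\hat v^T)\xi$, and its injectivity on $T_vS^{k-1}$ follows from $v^T\hat v=(1-v^Tg)/\norm{v-g}>0$, which in turn uses $\norm{g}=\norm{\theta Vc_i}<1$. The paper bypasses this computation by writing the inverse map explicitly, $\phi^{-1}(z)=\alpha z+g$ with $\alpha=-z^Tg+\sqrt{(z^Tg)^2+1-\norm{g}^2}$, thereby getting a \emph{global} diffeomorphism directly.
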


The full proof is provided in Appendix~\ref{sec:globalConvStepsize}. The main difference in the proof
from the step size free version is that, with a step size, we can prove
the diffeomorphism of the Mixing method and thus are able to use the stable
manifold theorem. Because the Jacobian of \emph{any} feasible method on the
spherical manifold is singular\footnote{Note that on the spherical manifold, the Jacobian of any feasible method is singular
because the Jacobian of $v_i/\norm{v_i}$ is singular.
Thus, the proof in \citet[Section 5.5]{lee2017first} does not
carry over to the case of the Mixing method, even with a step size.  This past
proof required a non-singular Jacobian, and thus different techniques are
required in our setting.}, we need to construct the inverse function
explicitly and show the smoothness of such function.  We can then use an
analysis of the Gauss-Seidel method with a step size to show our result.  To
the best of our knowledge, this represents the first globally optimal
convergence result for the low-rank method applied to (constrained) semidefinite
programming, without additional assumptions such as the Cauchy decrease 
\citep[Assumption A3]{boumal2016global},
 or solving a sequence of ``bounded'' log-barrier subproblems exactly \citep[Theorem 5.3]{burer2005local}.

\begin{remark}
Assume there are $m$ nonzeros in $C\in\bR^{n\times n}$.
With Theorem~\ref{thm:globalConvStepsize} and \ref{thm:Linear mixing},
for almost every $C$, the Mixing method with a step size admits an asymptotic complexity of 
$O(m\sqrt{n}\log\frac{1}{\epsilon})$ to reach the global optimality gap of $f-f^*\leq\epsilon$ almost surely
under random initialization. This concludes the theoretical analysis of the Mixing method.
\end{remark}

Now we will prove one of our main result: the instability of non-optimal critical points.
\subsection{Proof of Theorem~\ref{thm:unstable}: The instability of non-optimal criticals points}\label{sec:unstable}

Before starting the proofs, we discuss our notations and reformulate the Mixing methods.
\paragraph{Notations.}
We use upper-case letters for matrix, and lower-case letters for vector and scalar.
For a matrix $V\in \bR^{k\times n}$, $v_i\in\bR^{k}$ refers to the $i$-th column of $V$,
$\tvec(V)$ and $\vect(V)\in\bR^{nk}$ denote the vector stacking columns and rows of $V$, respectively. 
For a vector $y\in\bR^n$, $D_y=\diag(y)\in\bR^{n\times n}$ denotes the matrix with $y$ on the diagonal,
$y_{\max},y_{\min},y_{\minnz}\in\bR$ the maximum, the minimum, and the minimum nonzero element of $y$, respectively.
The symbol $\otimes$ denotes the Kronecker product, $\dagger$ the Moore-Penrose inverse, 
$\sigma(\cdot)$ the vector of eigenvalues,
$\rho(\cdot)$ the spectral radius,
$1_n$ the $1$-vector of length $n$,
$I_n$ the identity matrix of size $n$,
$\tr(\cdot)$ the trace,
$\mdot{A}{B}=\tr(A^T B)$ the dot product,
and $\norm{V}=\sqrt{\tr(VV^T)}$ the generalized L2 norm.
Indices $i,j$ are reserved for matrix element, and index $r$ for iterations.

\paragraph{The Mixing methods.}
We denote $C\in\bR^{n\times n}$ the cost matrix of the problem
\begin{equation*}
        \minimize_{V\in\bR^{k\times n}}\; f(V)\equiv\mdot{C}{V^TV}\quad \subjectto\;\norm{v_i}=1,\;i=1\ldots n,
\end{equation*}
and w.l.o.g.\ assume that $c_{ii}=0$ in all proofs.
Matrices $V$ and $\hat{V}$ refer to the current and the next iterate,
and $V^*$ the global optimum attaining an optimal value $f^*$
in the semidefinite program \eqref{eq:unitsdp}.\footnote{By \cite{pataki1998rank},
the optimality in \eqref{eq:unitsdp} is always attainable by $V\in\bR^{k\times n}$ when $k>\sqrt{2n}$.}
Let
\begin{equation}\label{eq:g}
        g_i=\sum_{j<i} c_{ij}\hat{v}_j +\sum_{j>i} c_{ij}v_j
\end{equation}
and matrix $L$ be the strict lower triangular part of $C$.
With these notations, the mapping of the Mixing method $M:\bR^{k\times n}\rightarrow\bR^{k\times n}$ 
and its variant $M_\theta$ with step size $\theta$ can be written as\footnote{The reason to reformulate here is to avoid the ``overwrite'' of variables in the algorithmic definition.
Moving the inverse term to the left-hand side, the reader can recover the original sequential algorithm.}
\begin{align} 
        M(V)^T&= -(L+D_y)^{-1}L^TV^T, \qquad\quad\; \text{where }y_i=\norm{g_i},\;\;\quad\quad\; i=1\ldots n,\quad\text{and}
        \label{eq:M}\\
        M_\theta(V)^T&= (\theta L+D_y)^{-1}(I-\theta L)^TV^T, \;\; \text{where }y_i=\norm{v_i-\theta g_i},\;i=1\ldots n.
        \label{eq:MS}
\end{align}
Note that in the analysis we assume $y_i>0$ without loss of generality,
otherwise we can remove those $v_i$ from the analysis because they are not updated.
Also, both $M$ and $M_\theta$ are valid functions of $V$ because $y$ can be calculated from $V$
by the original algorithmic definitions in Section~\ref{sec:Mixing}.
This formulation is similar to the classical analysis of the Gauss-Seidel method for linear equation
in \citet{golub2012matrix}, where the difference 
is that $y$ here is not a
constant to $V$ and thus the evolution is not linear.
\subsubsection*{Proof of technical lemmas.}
We start by analyzing the Jacobian of the Mixing method.
\begin{lemma}\label{lemma:Jacobian}
Using the notation in \eqref{eq:M}, the Jacobian of the Mixing method is
\begin{equation*}
        J_V = -(PL\otimes I_k+D_y\otimes I_k)^{-1} P L^T\otimes I_k.
\end{equation*}
in which $P$ is the rejection matrix of $V$. That is,
\begin{equation*}
        P=\diag(P_1,\;\ldots,\;P_n) \in \bR^{nk\times nk},\quad\text{where }P_i = I_k-\hat{v}_i\hat{v}_i^T\in\bR^{k\times k}.
\end{equation*}
\end{lemma}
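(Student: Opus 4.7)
My plan is to derive the Jacobian by implicit differentiation of the componentwise update, proceeding in three stages.

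First, I compute the Jacobian of a single normalization step. From $\hat v_i = -g_i/y_i$ with $y_i = \|g_i\|$, a standard computation using $d\|g_i\| = g_i^T dg_i/y_i$ together with the identity $g_i = -y_i\hat v_i$ yields
\[
d\hat v_i \;=\; -\frac{1}{y_i}\bigl(I_k - \hat v_i \hat v_i^T\bigr)\,dg_i \;=\; -\frac{P_i}{y_i}\,dg_i.
\]
This is precisely where the projection $P_i$ enters the final formula.

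Second, I apply the chain rule to the Gauss--Seidel-style coupling. Differentiating the definition $g_i = \sum_{j<i} c_{ij}\hat v_j + \sum_{j>i} c_{ij} v_j$ and substituting into the expression above produces the coupled linear system
\[
y_i\, d\hat v_i + P_i\!\sum_{j<i} c_{ij}\, d\hat v_j \;=\; -\,P_i\!\sum_{j>i} c_{ij}\, dv_j,\qquad i=1,\ldots,n.
\]

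Third, I assemble into Kronecker form and invert. Writing $x=\tvec(d\hat V)$ and $w=\tvec(dV)$ in $\bR^{nk}$, and recognizing the strict lower triangle of $C$ as $L$ (so that $L^T$ gives the strict upper triangle by symmetry of $C$), the system becomes
\[
\bigl[(D_y\otimes I_k) + P\,(L\otimes I_k)\bigr]\,x \;=\; -\,P\,(L^T\otimes I_k)\,w.
\]
The key manipulation is that $P$ and $D_y\otimes I_k$ are block-diagonal with mutually commuting blocks ($P_i$ and $y_i I_k$ respectively), so $P(D_y\otimes I_k) = (D_y\otimes I_k)P$; combined with the tangency relation $Px = x$ (each $d\hat v_i$ lies in the range of $P_i$ because $\hat v_i$ is unit-norm), the left-hand side rearranges to $P\bigl[(L+D_y)\otimes I_k\bigr]\, x$. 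Left-multiplying by $(L+D_y)^{-1}\otimes I_k$, which exists because $L+D_y$ is lower triangular with strictly positive diagonal entries $y_i$, then delivers
\[
x \;=\; -\bigl[(L+D_y)^{-1}\otimes I_k\bigr]\,P\,(L^T\otimes I_k)\,w,
\]
matching the stated form of $J(V)$.

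The main obstacle I expect is justifying the inversion in the third stage: since $P$ is only a projection, $P[(L+D_y)\otimes I_k]$ is not invertible, so one cannot simply ``solve'' for $x$ on the ambient space. The argument must carefully exploit both the commutation between $P$ and $D_y\otimes I_k$ and the tangency $Px=x$ to confirm that the displayed Kronecker-product expression actually produces the unique tangent-space solution of the coupled system. Tracking how $P$ does \emph{not} commute with $(L+D_y)^{-1}\otimes I_k$, yet still yields the correct $x$ after left-multiplication, is the delicate technical content of the proof.
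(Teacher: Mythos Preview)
Your first two stages match the paper exactly: differentiate the normalization to get $y_i\,d\hat v_i=-P_i\,dg_i$, then assemble the Gauss--Seidel recursion into the block system
\[
\bigl[(D_y\otimes I_k)+P(L\otimes I_k)\bigr]x=-P(L^T\otimes I_k)w.
\]

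The third stage is where you diverge from the paper, and your proposed manipulation does not actually go through. Rewriting the left side as $P\bigl[(L+D_y)\otimes I_k\bigr]x$ using $Px=x$ is fine, but left-multiplying by $B^{-1}:=(L+D_y)^{-1}\otimes I_k$ produces $B^{-1}PBx$ on the left, and this is \emph{not} $x$ even for $x$ in the range of $P$. Already for $n=2$ one computes
\[
B^{-1}PB=\begin{pmatrix}P_1&0\\ \tfrac{c_{12}}{y_2}(P_2-P_1)&P_2\end{pmatrix},
\]
so $(B^{-1}PBx)_2=x_2+\tfrac{c_{12}}{y_2}(P_2-I)x_1$, which differs from $x_2$ unless $x_1\perp\hat v_2$. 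Tangency gives only $x_1\perp\hat v_1$, so the ``delicate technical content'' you anticipate cannot be discharged along the line you sketch.

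The paper avoids this by inverting \emph{before} any tangency is invoked. The matrix $A=(D_y\otimes I_k)+P(L\otimes I_k)$ is block lower-triangular with invertible diagonal blocks $y_iI_k$, hence $A$ itself is invertible and one obtains $x=-A^{-1}P(L^T\otimes I_k)w$ outright, with no projection subtlety. Only afterwards does the paper simplify $A^{-1}P$ to $\bigl[(L+D_y)^{-1}\otimes I_k\bigr]P$, via the pseudo-inverse identity
\[
(D_y\otimes I_k+P\,L\otimes I_k)^{-1}P=(P\,L\otimes I_k+P\,D_y\otimes I_k)^{\dagger}=((L+D_y)\otimes I_k)^{-1}P.
\]
So the order is ``invert the genuinely invertible triangular system, then rearrange,'' not ``project, then try to invert.'' If you want to keep your route, you would need to argue directly that $-B^{-1}P(L^T\otimes I_k)w$ solves the original triangular system, which amounts to reproving the paper's pseudo-inverse step rather than bypassing it.
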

\begin{proof}
Denote $V$ and $\hat{V}$ the current and the next iterate. 
Taking total derivatives on the update of the Mixing method \eqref{eq:Mg}, we have
\begin{equation*}
	y_id\hat{v}_i = -P_idg_i = -P_i (\sum_{j<i}c_{ij}d\hat{v}_j + \sum_{j>i}c_{ij}dv_j),\quad i=1\ldots n.
\end{equation*}
Moving $d\hat{v}_j$ to the left-hand side. By the implicit function theorem, we have the Jacobian
\begin{equation*}
        J_V = -\begin{pmatrix}y_1 I_k & 0 & \ldots & 0\\
			c_{12}P_2 & y_2 I_k & \ldots & 0\\
			\ldots & \ldots & \ldots & 0\\
			c_{1n}P_n & c_{2n}P_n & \ldots & y_n I_k
	\end{pmatrix}^{-1}
	\begin{pmatrix}0 & c_{12}P_1 & \ldots & c_{1n}P_1\\
			0 & 0 & \ldots & \ldots\\
			0 & 0 & \ldots & c_{(n-1)n}P_{n-1}\\
			0 & 0 & \ldots & 0
	\end{pmatrix}.
\end{equation*}
The implicit function theorem holds here because the triangular matrix is always inversible.
Rewriting $J_V$ with Kronecker product leads to the result.
\end{proof}
Note that $V=\hat{V}$ on critical points, which is also a fixed point of the Mixing method. Now we demonstrate how to analyze the Jacobian. Remember the notation $\vect(Z)=\tvec(Z^T)$. 
This way, we have the following convenient property by the Kronecker product.
\begin{lemma}\label{lemma:KroneckerRule}
        For matrices $A,B,Q,R$, we have $A\otimes B \vect(Q R^T) = \vect((AQ)(BR)^T)$.
\end{lemma}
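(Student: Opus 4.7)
The plan is to reduce the claimed identity to the standard column-vectorization Kronecker identity
\[
(A\otimes B)\,\tvec(X) = \tvec(B X A^T),
\]
which is the textbook fact (see e.g.\ \cite{golub2012matrix}). The only genuine content of the lemma is that the paper uses the row-stacking convention $\vect(Z)=\tvec(Z^T)$ rather than the usual column-stacking $\tvec$, so I need to track a couple of transposes carefully and make sure everything lines up.

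First, I would rewrite the left-hand side in terms of $\tvec$. Since $\vect(QR^T)=\tvec\!\bigl((QR^T)^T\bigr)=\tvec(RQ^T)$, the left-hand side becomes $(A\otimes B)\,\tvec(RQ^T)$. Now I apply the standard identity with $X=RQ^T$ to obtain $\tvec\!\bigl(B(RQ^T)A^T\bigr)=\tvec\!\bigl((BR)(AQ)^T\bigr)$. Finally, I convert back to the $\vect$ notation: $\tvec\!\bigl((BR)(AQ)^T\bigr)=\vect\!\bigl(\bigl((BR)(AQ)^T\bigr)^T\bigr)=\vect\!\bigl((AQ)(BR)^T\bigr)$, which is exactly the right-hand side.

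The only delicate step is the dimensional/transpose bookkeeping in the middle line, since swapping $\tvec$ and $\vect$ flips the roles of $A$ and $B$; checking that the resulting product $(AQ)(BR)^T$ is well-defined (i.e.\ $A$ multiplies the first factor, $B$ the second, consistent with the shape of $A\otimes B$) is really the substance of the argument. There is no additional obstacle because the standard Kronecker identity is taken as known; the lemma is then a two-line transposition exercise which will be used repeatedly in the subsequent eigenvector analysis of $J(V)$ in Theorem~\ref{thm:unstable}.
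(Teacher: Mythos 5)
Your proof is correct and follows exactly the same route as the paper's: convert $\vect(QR^T)$ to $\tvec(RQ^T)$, apply the standard identity $(A\otimes B)\tvec(X)=\tvec(BXA^T)$, and transpose back. No further comment is needed.
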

\begin{proof}
                $A\otimes B \vect(Q R^T) = A\otimes B \tvec(RQ^T)
                = \tvec(B R Q^T A^T) = \vect((AQ)(BR)^T).$
\end{proof}
By the above property, part of the spectrum of the Jacobian can be analyzed.
\begin{lemma}[Overlapping Eigenvalues]\label{lemma:diffproj}
        Assume $V\in\bR^{k\times n}$ has $\textnormal{rank}(V)<k$. Let 
        \begin{equation*}
                P = \diag(P_1,\;\ldots,\;P_n),\quad\text{where }P_i = I_k -
                v_iv_i^T.
        \end{equation*}
        For any matrices $A,B,D\in\bR^{n\times n}$ where $(A+D)$ is inversible,
        any eigenvalue of $(A+D)^{-1}B$ is also an eigenvalue of
        \begin{equation*}
                J=[PA\otimes I_k + D\otimes I_k]^{-1}\otimes I_k P B\otimes I_k.
        \end{equation*}
\end{lemma}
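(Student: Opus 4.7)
}

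The plan is to exploit the rank deficiency of $V$ to produce a direction on which the block-diagonal projection $P$ acts as the identity, so that $J$ restricted to the corresponding subspace behaves exactly like $AB\otimes I_k$. Concretely, since $\textnormal{rank}(V)<k$, the column space of $V$ does not span $\bR^k$, so there exists a unit vector $u\in\bR^k$ with $V^T u=0$, i.e.\ $v_i^T u=0$ for every $i=1,\ldots,n$. This immediately gives $P_i u=(I_k-v_iv_i^T)u=u$ for each $i$.

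Next I would lift an arbitrary eigenpair of $AB$ into $\bR^{nk}$ using this direction $u$. Let $w\in\bR^n$ satisfy $ABw=\lambda w$ and consider the stacked vector $\vect(wu^T)\in\bR^{nk}$. By Lemma~\ref{lemma:KroneckerRule} with $Q=w$ and $R=u$, we have
\begin{equation*}
        B\otimes I_k\,\vect(wu^T) = \vect\bigl((Bw)\,u^T\bigr).
\end{equation*}
Since $\vect(Z)=\tvec(Z^T)$ stacks rows, $\vect((Bw)u^T)$ is the concatenation of the blocks $(Bw)_i\,u\in\bR^k$, so applying the block-diagonal matrix $P$ gives the concatenation of $(Bw)_i\,P_i u=(Bw)_i\,u$. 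Hence $P$ fixes this vector: $P\,\vect((Bw)u^T)=\vect((Bw)u^T)$. Applying Lemma~\ref{lemma:KroneckerRule} once more,
\begin{equation*}
        A\otimes I_k\,\vect\bigl((Bw)u^T\bigr) = \vect\bigl((ABw)u^T\bigr) = \lambda\,\vect(wu^T),
\end{equation*}
so $\vect(wu^T)$ is an eigenvector of $J$ with eigenvalue $\lambda$, which proves the claim.

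The only subtlety is to get the vectorization conventions right: because the paper uses $\vect(\cdot)$ (row-stacking) rather than $\tvec(\cdot)$ (column-stacking), one must apply Lemma~\ref{lemma:KroneckerRule} in the form stated rather than the textbook identity $(A\otimes B)\tvec(X)=\tvec(BXA^T)$. I expect no genuine obstacle beyond carefully tracking which factor in the Kronecker product acts on the ``$u$'' side versus the ``$w$'' side, and verifying that the chosen $u$ is annihilated by every $v_i^T$ simultaneously — both of which follow directly from the rank assumption and the definition of $P_i$.
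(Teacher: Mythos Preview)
Your proposal is correct and mirrors the paper's proof exactly: choose a nonzero $u\in\bR^k$ orthogonal to every $v_i$ (the paper's $r$), take an eigenvector $w$ of $AB$ (the paper's $q$), and verify via Lemma~\ref{lemma:KroneckerRule} that $\vect(wu^T)$ is an eigenvector of $J$ because each $P_i$ fixes $u$. One small correction: since $A,B\in\bR^{n\times n}$ are arbitrary, $AB$ may have complex eigenvalues, so you should take $w\in\bC^n$ rather than $\bR^n$ (as the paper does); the argument is otherwise unchanged.
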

\begin{proof}
        Because $\textnormal{rank}(V)<k$, by linear dependency there is a nonzero $z\in\bR^k$ such that
        \begin{equation}
                z^Tv_i = 0\;\text{ for }i=1\ldots n\quad\implies\quad P_i z = z\;\text{ for } i=1\ldots n.\no
        \end{equation}
        Observe the following matrix identity by separating the subspaces of $P$ and $I_{nk}-P$.
        \begin{align*}
                &[PA\otimes I_k + D\otimes I_k]\;\;(A+D)^{-1}\otimes I_k P\\
                ={ }&[P (A+D)\otimes I_k + (I_{nk}-P) D\otimes I_k] \;\;(A+D)^{-1}\otimes I_k P\\
                ={ }&P + (I_{nk}-P)(D(A+D)^{-1})\otimes I_k P.
        \end{align*}
        Inversing the square brackets and moving the $(I_{nk}-P)$ term to the other side, we get
        \begin{equation}
                [PA\otimes I_k+D\otimes I_k]^{-1} P
                = (A+D)^{-1}\otimes I_k P - [PA\otimes I_k+D\otimes I_k]^{-1}(I_{nk}-P)(D(A+D)^{-1})\otimes I_k P.
                \label{eq:jacob decomp}
        \end{equation}
        Beacuse there are both $P$ and $(I_{nk}-P)$ in the last term, for any vector $q$ we have from Lemma~\ref{lemma:KroneckerRule}
        \begin{align}
                &[PA\otimes I_k+D\otimes I_k]^{-1}(I_{nk}-P)(D(A+D)^{-1})\otimes I_k P \vect(qz^T)\no\\
                ={ }&[PA\otimes I_k+D\otimes I_k]^{-1}\vect\big(D(A+D)^{-1}q\;((I_k-P_i)P_i z)^T\big)_{i=1\ldots n}\no\\
                ={ }&0.
                \label{eq:jacob decomp zero}
        \end{align}
        Let $q\in\bC^n$ be an eigenvector of $(A+D)^{-1}B$ with eigenvalue $\lambda\in\bC$. Then
        \begin{align*}
                J\vect(qz^T) &= [PA\otimes I_k + D\otimes I_k]^{-1}\otimes I_k P\vect((Bq)z^T)\\
                &=((A+D)\otimes I_k)^{-1} P\vect((Bq)z^T)\\
                &= \vect((A+D)^{-1}Bq\;z^T)\\
                &= \lambda \vect(q z^T),
        \end{align*}
        where the first equality follows from Lemma~\ref{lemma:KroneckerRule},
        the second equality follows from $P_iz=z,\;\forall i$ and \eqref{eq:jacob decomp}--\eqref{eq:jacob decomp zero},
        and the last equality follows from $q$ being an eigenvector.
        Thus, every eigenvalue $\lambda$ of $(A+D)^{-1}B$ is also an eigenvalue of $J$.
\end{proof}
By the above lemma, the spectral radius of $J=-(PL\otimes I_k+D_y\otimes I_k)^{-1} P L^T\otimes I_k$ is lower bounded by $J_{GS}=-(L+D_y)^{-1}L^T$,
which can be again lower bounded as follows.
\begin{lemma}\label{lemma:GSspectral}
        For a positive vector $y\in\bR^n$, consider a matrix under the notation in \eqref{eq:M}
        \begin{equation*}
                J_{GS} = -(L+D_{y})^{-1}L^T.
        \end{equation*}
        Let $S=C+D_{y}$. 
        When $S\not \succeq 0$, the spectral radius $\rho(J_{GS})>1$.\footnote{If
        $S\succeq 0$, we can prove that the spectral radius $\rho(J_{GS})\leq 1$, in which all eigenvectors with magnitude $1$ reside in the null of $S$,
        as an immediate result from \citet[Corollary 3.4]{wang2014iteration}. However, the result is not used here.}
\end{lemma}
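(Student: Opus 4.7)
The plan is to establish a sign dictionary between the modulus of each eigenvalue of $J_{GS}$ and the quadratic form $x^*Sx$ on the corresponding eigenvector, and then to produce an eigenvector on which $S$ takes a negative value. For the dictionary I would start from the eigenequation $-L^T x = \lambda (L+D_y) x$, rewrite it as $Sx = (1-\lambda)(L+D_y)x$, and take the inner product with $x^*$. Setting $s := x^*Sx \in \bR$ (real by symmetry of $S$), $m := x^*(L+D_y)x$, and $\nu := x^*D_y x > 0$, the relation $s = (1-\lambda)m$ together with $m + \bar m = x^*(S+D_y)x = s + \nu$ yields, after a short complex-arithmetic manipulation,
\begin{equation*}
|\lambda|^2 \;=\; 1 \;-\; \frac{s\,\nu}{|m|^2}.
\end{equation*}
This gives the clean dichotomy $|\lambda|>1 \iff x^*Sx<0$, and reduces the lemma to exhibiting one eigenvector of $J_{GS}$ on which $S$ is negative.

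To locate such an eigenvector when $S \not\succeq 0$, I would run a Varga-type energy descent. The splitting $S = (L+D_y) - (-L^T)$ satisfies $(L+D_y) + (L+D_y)^T - S = D_y \succ 0$, so a direct computation gives, for the iteration $x_{k+1} = J_{GS} x_k$,
\begin{equation*}
x_k^T S x_k - x_{k+1}^T S x_{k+1} \;=\; x_k^T S\,(L+D_y)^{-T} D_y (L+D_y)^{-1} S\, x_k \;\geq\; 0,
\end{equation*}
so $x_k^T S x_k$ is monotone non-increasing along the orbit. Pick any real $v$ with $v^T S v < 0$ and initialize at $v$. If $\rho(J_{GS}) < 1$ then $x_k \to 0$ and hence $x_k^T S x_k \to 0$, contradicting the monotone bound $\leq v^T S v < 0$. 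The same contradiction applies whenever $J_{GS}$ is power bounded on the unit circle, since orbit boundedness together with summability of the non-negative decrements forces $(L+D_y)^{-1} S x_k \to 0$, hence $S x_k \to 0$ and $x_k^T S x_k \to 0$, again incompatible with $x_k^T S x_k \leq v^T S v < 0$. Therefore $\rho(J_{GS}) > 1$ whenever $J_{GS}$ is power bounded on the unit circle.

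The main obstacle is the remaining case in which $J_{GS}$ admits a nontrivial Jordan block on the unit circle, where the energy contradiction above can break because the orbit may grow polynomially. I would close it by a perturbation: embed $y$ in the family $y_t = y + t\bone$, producing $S_t = S + tI$ and $J_{GS}(t) = -(L+D_{y_t})^{-1} L^T$. For $t > -\lambda_{\min}(S)$ the shifted matrix $S_t$ is positive definite, and the classical Householder--John theorem gives $\rho(J_{GS}(t)) < 1$ with no eigenvalues on the unit circle; at $t^* := -\lambda_{\min}(S) > 0$ the factorization $I - J_{GS}(t) = (L+D_{y_t})^{-1} S_t$ forces $\lambda = 1$ to be an eigenvalue of $J_{GS}(t^*)$, so $\rho(J_{GS}(t^*)) = 1$. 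Along the analytic eigenvalue branch $\lambda(t)$ emerging from this point, a first-order sensitivity computation via the paragraph-1 identity (with $x_0 \in \ker S_{t^*}$ normalized) yields $s'(t^*) = \|x_0\|^2 > 0$, so $s(t) < 0$ and $|\lambda(t)|^2 > 1$ for $t$ slightly below $t^*$. Tracking the branch continuously down to $t = 0$, and using the sign dictionary to argue that any putative re-entry into the unit disc would itself force $s = 0$ and be repelled outward again by the same sensitivity, gives $\rho(J_{GS}) = \rho(J_{GS}(0)) > 1$, which is the lemma.
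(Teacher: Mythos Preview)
Your first two paragraphs are correct and already contain almost everything needed for a complete proof---one that is more direct than the paper's. The identity $|\lambda|^2 = 1 - s\nu/|m|^2$ is right (and $m\neq 0$ always: $m=0$ together with the eigenequation forces $x^*L^Tx=0$, whence $m=\nu>0$). But you are underusing it. From $s=(1-\lambda)m$ with $m\neq 0$ you get $s=0\iff\lambda=1$; combined with your identity (equivalently, $s(1-|\lambda|^2)=\nu|1-\lambda|^2$), this gives $|\lambda|=1\Rightarrow\lambda=1$. So the \emph{only} possible unit-circle eigenvalue of $J_{GS}$ is $\lambda=1$, with eigenspace exactly $\ker S$. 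Moreover $\lambda=1$ is always semisimple: a generalized eigenvector $w$ with $(J_{GS}-I)w=x\in\ker S\setminus\{0\}$ would give $Sw=-(L+D_y)x$, hence $0=(Sx)^*w=x^*Sw=-x^*(L+D_y)x=-m_x$; but for real $x\in\ker S$ one computes $m_x=x^TLx+\nu_x=\tfrac12 x^TCx+\nu_x=-\tfrac12\nu_x+\nu_x=\nu_x/2>0$. Thus there is no Jordan block on the unit circle, the hypothesis $\rho(J_{GS})\le 1$ forces power-boundedness, and your Paragraph~2 already finishes the proof.

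Your Paragraph~3, by contrast, has a real gap. The sensitivity $s'(t^*)=\|x_0\|^2$ at the top of the homotopy relies on $S_{t^*}x_0=0$, which kills the cross term $2\,\mathrm{Re}\bigl(x'(t)^*S_{t}x(t)\bigr)$ at $t=t^*$. At a putative re-entry point $t_1<t^*$ you only know $x(t_1)^*S_{t_1}x(t_1)=0$, not $S_{t_1}x(t_1)=0$, so the cross term survives and ``the same sensitivity'' does not apply; nothing then prevents $s'(t_1)\le 0$ and genuine re-entry of that branch. (There is also the usual delicacy that eigenvalue branches of a non-normal analytic family need not be individually analytic through collisions.) Fortunately, by the previous paragraph this case is vacuous and Paragraph~3 is unnecessary.

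For comparison, the paper takes a quite different route and never writes down your eigenvalue identity. It argues purely in the energy $f(x_r)=x_r^TSx_r$ from an initial $x_0$ with $f(x_0)<0$: either $f$ converges (whence, via vanishing decrements, the iterates reach a fixed point with $S\bar x=0$ and $f=0$, a contradiction) or $f\to-\infty$. In the divergent case the paper invokes a coordinate-wise ``sufficient gradient'' claim of Lee et al.\ to upgrade the divergence to geometric, and then Gelfand's spectral-radius formula yields $\rho(J_{GS})>1$. Your approach, once patched with the semisimplicity observation above, sidesteps that technical claim entirely and is shorter.
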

\begin{proof}
The proof is more technical and is given in Appendix~\ref{sec:GSspectral}.
\end{proof}
Further, the assumption in Lemma~\ref{lemma:diffproj} is fulfilled by the following property of critical points.
\begin{lemma}\label{lemma:deficient}\citep[Lemma 9]{boumal2016non}
        Let $\frac{k(k+1)}{2}>n$. Then, for almost all $C\in\bR^{n\times n}$, all first-order critical points $V\in\bR^{k\times n}$ have rank smaller than $k$.
\end{lemma}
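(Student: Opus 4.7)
The plan is to reinterpret ``there exists a rank-$k$ first-order critical point'' as a hidden low-dimensional algebraic constraint on $C$, and then apply a Sard-type dimension count to show that the set of such $C$ has measure zero in $\bR^{n\times n}$.

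First, I write down the first-order optimality condition on the product of spheres. Attaching Lagrange multipliers $y_i$ to the constraints $\norm{v_i}^2 = 1$, the KKT condition reads $\sum_j c_{ij} v_j = y_i v_i$ for each $i$, equivalently $V(C - D_y) = 0$ for some $y \in \bR^n$. Since $\mdot{C}{V^T V}$ depends only on the symmetric part of $C$, I reduce WLOG to $C$ symmetric and set $S = C - D_y$, so that $S$ is symmetric and $VS = 0$. If $\textnormal{rank}(V) = k$, then $\textnormal{null}(V)$ has dimension $n - k$; the identity $VS = 0$ forces every column of $S$ into this subspace, and together with the symmetry of $S$ this yields $\textnormal{rank}(S) \leq n - k$.

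Hence every $C$ admitting a rank-$k$ critical point lies in the image of the map
\[
    \Phi : \Sigma_{n-k} \times \bR^n \to \textnormal{Sym}(n), \qquad \Phi(S, y) = S + D_y,
\]
where $\Sigma_r$ denotes the variety of $n \times n$ symmetric matrices of rank at most $r$ and $\textnormal{Sym}(n)$ is the ambient space of symmetric matrices. I stratify $\Sigma_{n-k}$ by exact rank; each stratum is a smooth manifold that can be parameterized via $S = U T U^T$ with $U$ on the Stiefel manifold and $T$ a smaller invertible symmetric matrix. The top stratum (rank exactly $n - k$) has dimension $(n-k)(n+k+1)/2$, so the top-stratum domain of $\Phi$ has dimension $(n-k)(n+k+1)/2 + n$. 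A routine expansion gives $(n-k)(n+k+1) + 2n = n(n+1) - (k(k+1) - 2n)$, so this is strictly below $\dim \textnormal{Sym}(n) = n(n+1)/2$ precisely when $k(k+1)/2 > n$; the lower-rank strata have even smaller dimension. By Sard's theorem (equivalently, the elementary fact that a smooth map from a lower-dimensional manifold into a higher-dimensional one has measure-zero image), the image of $\Phi$ has Lebesgue measure zero in $\textnormal{Sym}(n)$. Extending from the symmetric part of $C$ back to all of $\bR^{n \times n}$ (by adding an arbitrary antisymmetric part) preserves the measure-zero property, and yields the claim.

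The main obstacle is handling the singular variety $\Sigma_{n-k}$ without getting tangled in its non-manifold points; the stratification into smooth rank-$r$ pieces and the explicit Stiefel-based factorization $S = U T U^T$ circumvent the singular locus cleanly. A secondary point is that only the forward implication (rank-$k$ critical $\Rightarrow$ $C \in \textnormal{image}(\Phi)$) is needed here, so I am spared the harder direction of verifying that every $C$ in the image actually hosts a rank-$k$ critical point.
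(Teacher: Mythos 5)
Your proposal is correct and follows essentially the same dimension-counting argument as the paper: a rank-$k$ critical point forces $C$ (up to its symmetric part) into the set $\{S + D_y : \textnormal{rank}(S)\leq n-k,\; y\in\bR^n\}$, whose dimension $\frac{n(n+1)}{2}-\frac{k(k+1)}{2}+n$ falls below $\dim \textnormal{Sym}(n)$ exactly when $\frac{k(k+1)}{2}>n$. Your stratification of the bounded-rank variety plus the Sard-type measure-zero-image step is just a more careful formalization of the paper's informal bound on the dimension of the union $\bigcup_\ell \mathcal{N}_\ell + im(D)$, so the two arguments coincide in substance.
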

\begin{proof} The proof is listed in Appendix~\ref{sec:defficient} for completeness.
\end{proof}
Next, we characterize the optimality of $V$ by proving all non-optimal $V$ admits an $S\not\succeq 0$.
\begin{lemma}\label{lemma:obvious}
        For a critical solution $V$, denote $S=C+\diag(y)$, where $y_i=\norm{Vc_i}, \;\forall i$.
        Then
        \begin{equation*}
                S\succeq 0 \;\text{ if and only if }\; V\text{ is optimal.}
        \end{equation*}
        Further, if $V$ is optimal, all $y_{i}$ are positive except when $c_i=0$.\footnote{Let $y_i^*=\norm{V^*c_i}$ and $S^*=C+\diag(y^*)\succeq 0$. An immediate consequence of the lemma is that, for any feasible $U$,
        $f(U)-f^* = \tr(UCU^T)+1_n^T y^* = \tr(US^*U^T)$. Further, suppose $U$ is also an optimum, then $f(U)-f^* =\tr(US^*U^T)=0 \iff US^*=0\iff \norm{Uc_i}=y_i^*=\norm{V^*c_i},\;\forall i.$ That is, $y^*$ is unique.}
\end{lemma}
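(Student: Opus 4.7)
The plan is to prove both directions by SDP strong duality, using the critical-point equations of the Mixing method as the bridge between primal and dual certificates. First I would unpack what ``critical solution'' means in this context: since each Mixing step minimizes $v_i^T g_i$ with $g_i = V c_i$ over $\norm{v_i}=1$ (and $c_{ii}=0$ without loss of generality), a critical $V$ satisfies $v_i = -Vc_i/\norm{Vc_i}$, equivalently $Vc_i=-y_i v_i$. Stacking over $i$ this becomes the single matrix identity $VS = V(C+\diag(y)) = 0$, which is the main algebraic lever. A direct consequence is that the primal objective at $X=V^TV$ simplifies to $\mdot{C}{V^TV} = \sum_i v_i^T(Vc_i) = -\sum_i y_i = -1_n^T y$.

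Next I would write down the SDP dual of~\eqref{eq:unitsdp}, namely $\max_\mu 1_n^T\mu$ subject to $C-\diag(\mu)\succeq 0$. Slater's condition holds (take $X=I$), so strong duality is available. The ($\Leftarrow$) direction is then essentially free: if $S=C+\diag(y)\succeq 0$ then $\mu:=-y$ is dual feasible with dual value $-1_n^T y$, which already equals the primal value at $V^TV$, so weak duality promotes both to optima.

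The ($\Rightarrow$) direction is where I expect the main work. Suppose $V^TV$ is primal optimal, and let $\mu^*$, $Z^*=C-\diag(\mu^*)\succeq 0$ be a dual optimum. Complementary slackness gives $\mdot{Z^*}{V^TV}=0$; the step I see as the technical heart of the argument is upgrading this trace identity to the pointwise equation $VZ^*=0$. This is achieved by observing that $VZ^*V^T\succeq 0$ has trace zero and therefore vanishes, so $V(Z^*)^{1/2}=0$ and hence $VZ^*=0$. Reading column $i$ yields $Vc_i=\mu_i^* v_i$, which combined with the critical equation $Vc_i=-y_i v_i$ and $v_i\neq 0$ forces $\mu_i^*=-y_i$ for every $i$; therefore $S=Z^*\succeq 0$.

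Finally, for the ``further'' clause I would invoke the standard PSD fact that $S\succeq 0$ with $S_{ii}=0$ forces the entire $i$-th row and column of $S$ to vanish. Since $S_{ii} = c_{ii}+y_i = y_i$ (using $c_{ii}=0$) and the off-diagonal part of $S$ agrees with that of $C$, the assumption $y_i=0$ would force $c_i=0$; contrapositively $c_i\neq 0$ implies $y_i>0$, completing the lemma.
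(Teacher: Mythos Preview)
Your proposal is correct and follows essentially the same route as the paper. Both directions hinge on SDP duality together with the critical-point identity $VS=0$; the paper reads off the dual variable from the diagonal of $V^TV(C+\diag(y))=0$, whereas you first upgrade $\tr(V^TVZ^*)=0$ to $VZ^*=0$ via the square-root argument and then read columns, but these are the same complementary-slackness computation, and your treatment of the ``further'' clause (PSD with a zero diagonal entry forces the row/column to vanish) is exactly the $2\times 2$ minor argument the paper uses.
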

\begin{proof}
        Consider the dual problem of the SDP problem \eqref{eq:unitsdp},
        \begin{equation*}
                \maximize_{y\in\bR^n}\; -1_n^Ty,\quad\subjectto\;\;C+\diag(y)\succeq 0.
        \end{equation*}
        If $S=C+\diag(y)\succeq 0$, variable $y$ becomes a feasible solution of the above dual problem.
        Further, since $V$ is a critical solution, we have 
        \begin{equation*}
        VS=0\;\implies\;V^TVS=0\;\implies\; \tr(V^TVC)=-\tr(V^TV\diag(y))=-1_n^Ty,
        \end{equation*}
        which means that $V^TV$ and $y$ are primal and dual optimal solutions that close the duality gap.
        Thus, for critical $V$, $S\succeq 0$ implies optimality, and non-optimality implies $S\not\succeq 0$.
        
        On the other direction, when the solution $V$ is optimal, there will be a corresponding dual optimal solution $y$ satisfying
        \begin{equation*}
                V^TV(C+\diag(y)) = 0\implies v_i^T V(C+\diag(y))=0,\;\forall i\implies y_i = \norm{Vc_i},\;\forall i.
        \end{equation*}
        And $S=C+\diag(y)\succeq 0$ follows from the dual feasibility. 
        By the characterization of SPSD matrix, all submatrix of $S\succeq 0$ are SPSD. 
        Thus, $y_{i}\geq 0$. 
        If equality $y_{i}=0$ holds, by the same reason all $2\times 2$ submatrix 
        $\begin{pmatrix}0&c_{ij}\\c_{ij}&y_{jj}\end{pmatrix}\succeq 0$, $\forall j$.
        This means $c_i=0$.
\end{proof}

\subsubsection*{Proof of Theorem~\ref{thm:unstable}}
\begin{proof}
We first derive the Jacobian $J$ of the Mixing method in Lemma~\ref{lemma:Jacobian}, which gives
\begin{equation*}
  J = -(PL\otimes I_k+D_y\otimes I_k)^{-1} P L^T\otimes I_k,
\end{equation*}
where $P=\diag(P_1,\ldots,P_n)$ and $P_i=I-v_iv_i^T$ because $\hat{v}_i=v_i$ on the critical point (also a fixed point).
In Lemma~\ref{lemma:diffproj}, we prove that when $\textnormal{rank}(V)<k$, 
the eigenvalues of $J$ contain the eigenvalues of 
\begin{equation*}
  J_{GS} = -(L+D_y)^{-1}L^T.
\end{equation*}
The assumption in Lemma~\ref{lemma:diffproj} is fulfilled by Lemma~\ref{lemma:deficient},
which guarantees that for almost every $C$, all the first-order critical point must have $\textnormal{rank}(V)<k$.
Further, Lemma~\ref{lemma:GSspectral} and \ref{lemma:obvious}
show that $J_{GS}$ happens to be the Jacobian of the Gauss-Seidel method on a linear system, 
which has a spectral radius $\rho(J_{GS})>1$ on the non-optimal first-order critical point $V$.
Thus, Lemma~\ref{lemma:diffproj} implies $\rho(J)\geq\rho(J_{GS})>1$, which means that all non-optimal first-order critical points are unstable for the Mixing method.
\end{proof}

\section{Applications}

\subsection{Maximum cut problem}\label{sec:maxcut}
The SDP MAXCUT relaxation is indeed
the motivating example of the Mixing method, so we consider it first.  In this
section, we demonstrate how to apply our method to this problem, which
originated from \citet{goemans1995improved}.

\paragraph{Problem description. }
The maximum cut problem is an
NP-hard binary optimization problem, which seeks a partition over a set of vertices
$i=1\ldots n$, so that the sum of edge weights $c_{ij}$ across the partition is
maximized. If we denote the two partitions as $\pm 1$, we can formulate the
assignment $v_i$ of vertex $i$ as the following binary optimization problem
\begin{equation*} 
        \maximize_{v_i\in\{\pm 1\},\;\forall i}\;\; \frac{1}{2}\sum_{ij}
c_{ij}\left(\frac{1-v_iv_j}{2}\right).
\end{equation*} 
\citet{goemans1995improved} proposed that we can approximate the
above solution by ``lifting'' the assignment $v_i$ from $\{\pm 1\}$ to a unit sphere in $\mathbb{R}^k$ for
sufficiently large $k$ as
\begin{equation*} \maximize_{\norm{v_i}=1,\;\forall i}\;\; \frac{1}{2}\sum_{ij}
c_{ij}\left(\frac{1-v_i^T v_j}{2}\right).
\end{equation*} To recover the binary assignment, we can do a randomized rounding
by picking a random vector $r\in\bR^k$ on the unit sphere, and letting the binary
assignment of vertex $i$ be $\text{sign}(r^T v_i)$. Their analysis shows that the
approximation ratio for the NP-hard problem is $0.878$, which means that the
expected objective from the randomized rounding scheme is at least $0.878$ times
the optimal binary objective.

\paragraph{Algorithm Design.} 
Because the problem can be solved by the unit diagonal SDP \eqref{eq:unitsdp}, we can apply
the Mixing method directly, as presented in Algorithm \ref{alg:mixing}. Further,
for a sparse adjacency matrix $C$, 
the coefficient $\sum_{j}c_{ij}v_j$ can be constructed in time proportional to
the nonzeros in column $i$ of $C$. Thus, the time complexity of running a round
of
updates for all $v_i$ is $O\left(k\cdot \#\textnormal{edges}\right)$, in which
$k$ is at most $\sqrt{2n}$.

\subsection{Maximum satisfiability problem} Using similar ideas as in 
the previous section, \citet{goemans1995improved} proposed that we can use
SDP to approximate the maximum 2-satisfiability problem. In this section, we
propose a formulation that generalizes this idea to the general maximum satisfiability
problem, and apply the Mixing method to this problem.  The proposed relaxation
here is novel, to the best of our knowledge, and (as we will show) achieves
substantially better approximation results than existing relaxations. 

\paragraph{Problem description. }The MAXSAT problem is an extension of the
well-known satisfiability problem, where the goal is to find an assignment that
\emph{maximizes} the number of satisfied clauses.  Let $v_i\in\{\pm 1\}$ be
a binary variable and $s_{ij}\in\{-1, 0, 1\}$ be the sign of variable $i$ in
clause $j$.  The goal of MAXSAT can then be written as the optimization problem
\begin{equation*}
  \maximize_{v\in\{-1,1\}^n}\;\; \sum_{j=1}^m \bigvee_{i=1}^n \mathbf{1}\{s_{ij} v_i > 0 \}.
\end{equation*}
Note that most clauses will contain relatively few variables, so the $s_{j}$
vectors will be sparse.  To avoid the need for an additional bias term, we
introduce an auxiliary ``truth'' variable $v_0$, and 
define $z_j = \sum_{i=1}^ns_{ij}v_i - 1 = \sum_{i=0}^n s_{ij}v_i = Vs_j$.  Then the
MAXSAT problem can be approximated as
\begin{equation*}
        \maximize_{v\in\{-1,1\}^n}\;\;\sum_{j=1}^m 1-\frac{\norm{Vs_j}^2 - (|s_j|-1)^2}{4|s_j|}.
\end{equation*}
Although we will not derive it formally, the reader can verify that 
for any configuration $v\in\{-1,1\}^n$, this term represents an upper bound on the exact MAXSAT solution.\footnote{Actually, the formula matches the approximation of \citet{goemans1995improved} for MAX-2SAT.}
Similar to the MAXCUT SDP, we can relax the $v_i$s to be vectors in $\mathbb{R}^k$ with
$\norm{v_i}=1$.  This leads to the full MAXSAT semidefinite programming relaxation
\begin{equation*}
                \minimize_{X \succeq 0}\;\;\mdot{C}{X},\;\; \subjectto \;\;C=\sum_{j=1}^m w_j s_j s_j^T,\;\; X_{ii}=1,\;i=0\ldots n,
\end{equation*}
where $w_j=1/(4|s_j|)$.

\begin{algorithm}[t]
        \begin{algorithmic}[1]
        \State Initialize all $v_i$ randomly on a unit sphere, $i=1\ldots n$.\;
        \State Let $z_j=\sum_{i=0}^n s_{ij}v_i$ for $j=1,\ldots,m$\;
        \While{not yet converged}
                \For{$i=1,\ldots,n$}
                        \State \textbf{For each} $s_{ij}\neq 0$ \textbf{do} {$z_j := z_j - s_{ij}v_i$}
                        \State $v_i := \textnormal{normalize}\left(-\sum_{j=1}^m \frac{s_{ij}}{4|s_j|} z_j\right)$ if the norm is non-zero
                        \State \textbf{For each} $s_{ij}\neq 0$ \textbf{do} {$z_j := z_j + s_{ij}v_i$}
                \EndFor
        \EndWhile
        \end{algorithmic}
        \caption{The Mixing method for MAXSAT problem}\label{alg:maxsat}
\end{algorithm}
\paragraph{Algorithm Design.}  Because the $C$ matrix here is not sparse ($s_j s_j^T$ has $|s_j|^2$ non-sparse entries), we need a
slightly more involved approach than for MAXCUT, but the algorithm is still extremely simple.
Specifically, we maintain $z_j=Vs_j$ for all clauses $j$. 
Because in each subproblem only one variable $v_i$ is changed, $z_j$ can be
maintained in $O(k \cdot m_i)$ time, where $m_i$ denotes the number of
clauses that contain variable $i$. In total,
the iteration time complexity is $O(k \cdot m)$,
where $m$ is the number of literals in the problem.
Also, because applying arbitrary rotations $R\in\bR^{k\times k}$ to $V$ does not
change the objective value of our problem, we can avoid updating $v_0$. 
Algorithm~\ref{alg:maxsat} shows the complete algorithm.
To recover the binary assignment, we apply the following classic rounding
scheme: sample a random vector $r$ from a unit sphere, then assign binary
variable $i$ as true if $\text{sign}(r^T v_i)=\text{sign}(r^T v_0)$ and false
otherwise.

\section{Experimental results}

\begin{figure*}[t] \centering
\includegraphics[scale=0.36]{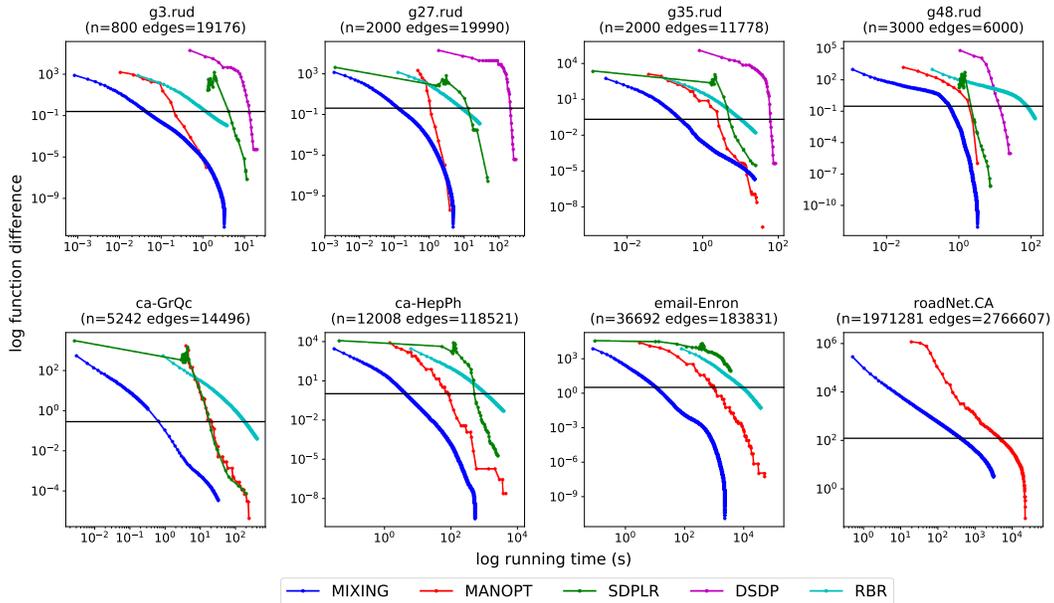}

\caption{Objective value difference versus training time for the MAXCUT problems
  (log-log plot, lower is better). The horizontal lines mark the default stopping precision of
  the Mixing method, which is $10^{-4}$ times the starting relative objective of
  the Mixing method. Experiments show that our method (the blue line) is
  10-100x faster than other solvers on our default stopping precision.
  Note that sometimes curves for SDPLR, DSDP, and RBR are not plotted because they either crashed or did not output any solution after an hour.} 
  \label{fig:maxcut-fval}
\end{figure*}

\paragraph{Running time comparison for MAXCUT}
Figure \ref{fig:maxcut-fval} shows the results of running the Mixing method on
several instances of benchmark MAXCUT problems.  These range in size from
approximately 1000 nodes and 20000 edges to approximately 2 million nodes and 3
million edges.  For this application, we are largely concerned with evaluating
the runtime of our Mixing method versus other approaches for solving the same
semidefinite program.  Specifically, we compare to DSDP \citep{dsdp5}, a mature interior
point solver; SDPLR \citep{burer2003nonlinear}, one of the first approaches to use low-rank
structures; Pure-RBR \citep{wen2009row,wen2012block}, a coordinate descent method in the $X$ space, 
which outputs the best rank-1 update at each step;
and Manopt \citep{boumal2014manopt}, a recent toolkit for optimization on Riemannian
manifolds, with specialized solvers dedicated to the MAXCUT problem.\footnote{We
didn't compare to commercial software like MOSEK, an interior-point solver like DSDP, because it is not open-source and \citet{boumal2015riemannian} already showed that SDPLR is faster than MOSEK on diagonally constrained problems.}
To be specific, we use DSDP 5.8, SDPLR 1.03-beta, and Manopt 3.0 with their default parameters.
For Manopt, we compare to a subroutine "elliptopefactory”, specially designed for diagonal-constrained SDP.
For Pure-RBR, we implement the specialized algorithm \citep[Algorithm 2]{wen2009row} for MAXCUT SDP with a sparse graph in C++,
which only requires a single pass of the sparse matrix per iteration. We omit the log barrier and initialize the RBR with full-rank $X$.
All experiments are run on an Intel Xeon E5-2670 machine with 256 GB memory,
and all solvers are run in the single-core mode to ensure fair comparisons.
As the results show, in all cases the Mixing method is substantially faster than
other approaches: for reaching modest accuracy (defined as $10^{-4}$ times the
difference between the initial and optimal value), we are typically 10-100x
faster than all competing approaches; only the Manopt algorithm ever surpasses our
approach, and this happens only once both methods have achieved very high
accuracy.  Crucially, on the largest problems, we remain about 10x (or more)
faster than Manopt over the entire run, which allows the Mixing method to scale
to substantially larger problems.

\begin{figure*}[t] \centering

\includegraphics[scale=0.36]{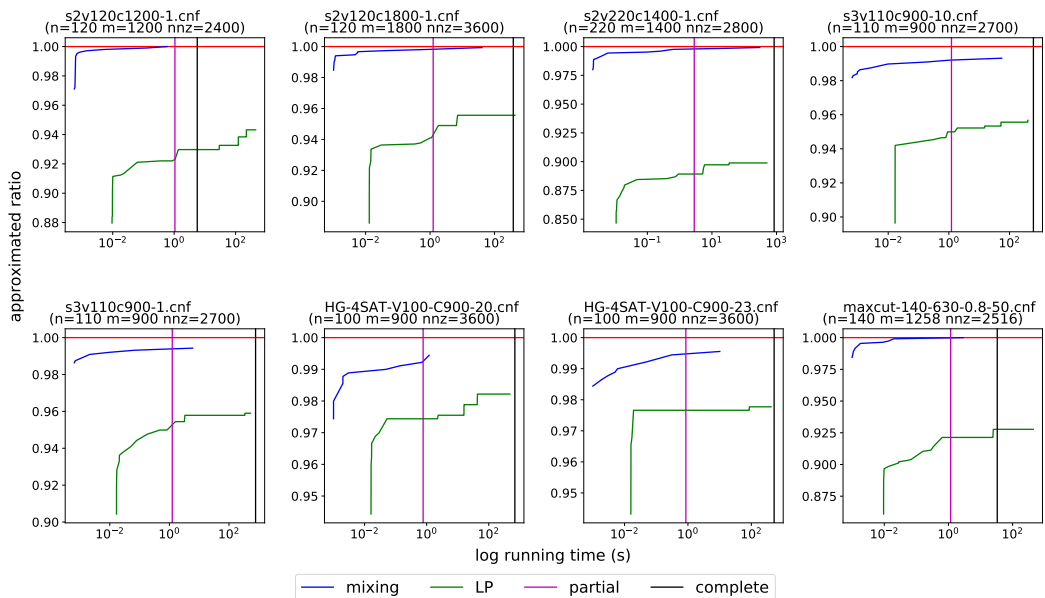}
\caption{Approximated ratio versus (log) running time for the MAXSAT problems (higher is better). 
The horizontal line marks the perfect approximation ratio ($1.00$), 
and the curves mark the approximation ratio of different approximation algorithms over time.
Experiments indicate that our proposed formulation/method (blue curves) achieves better approximation ratios in less time compared to LP.
Further, it is sometimes faster than the best partial solvers (purple vertical lines) and complete solvers (black vertical lines) in the MAXSAT 2016 competition.}
  \label{fig:maxsat-fval}
\end{figure*}

\paragraph{Effectiveness of the Mixing method on approximating MAXSAT problems.}
 Unlike the previous experiment (where the focus was solely on optimization
performance), in this section we highlight the fact that with the Mixing method
we are able to obtain MAXSAT results with a high approximation ratio on
challenging domains (as the problems are similar, relative optimization
performance is similar to that of the MAXCUT evaluations).  Specifically, we
evaluate examples from the 2016 MAXSAT competition \citep{maxsatrace2016} 
and compare our result to the best heuristic \emph{complete} and \emph{partial} solvers. Note that the complete solver produces a verified result, while the partial solver outputs a non-verified solution. 
Out of 525 problems solved in the complete track (every problem solved exactly by some solvers within 30 minutes
during the competition), our method achieves an average approximation ratio of
0.978, and usually finds such solutions within seconds or less. Further, in some instances we obtain perfect solution faster than the best partial solvers.
Figure
\ref{fig:maxsat-fval} shows the progress of the approximate quality versus the running
time. Beside the best heuristic solvers in MAXSAT 2016, we also show the approximation ratio over time for 
the well-known linear programming
approximation \citep{goemans1994new} (solved via the Gurobi solver). 
Note that each point in the blue and green curves denote the approximation ratio of the output solution at the time,
and the starting points of the curves denote the time that the solver output the first solution.
In all cases the Mixing method gives better and faster solutions than the LP approximations.

\paragraph{The sufficient rank for optimization and randomized rounding}
We proved that a low-rank of $\sqrt{2n}$ is sufficient for optimizing the SDP \citep{barvinok1995problems,pataki1998rank}.
But can we use an even lower rank? Will the low rank affect the quality of randomized rounding?
In Figure~\ref{fig:maxsat-rank}, we ran several MAXSAT SDP instances with ranks ranging from $1$ to double the theoretical rank upper-bound and record the relative error, which is $(f-f^*)/f^*$ for the objective value and $(\text{unsat}-\text{opt})/\text{\#clauses}$ for the approximation ratio of the randomized rounding.
The experiment shows that a very low rank ($\leq 5$ in those instances) is enough to achieve nearly optimal performance in both the optimization and the randomized rounding process.
Further, we see that increasing the rank beyond the theoretical upper bound doesn't improve the performance.
That is, the $\sqrt{2n}$ rank is sufficient for both optimization and randomized rounding,
and it is possible to use a lower rank without hurting the performance.

\begin{figure*}[t] \centering
\includegraphics[scale=0.43]{\detokenize{figure/all}.eps}
\caption{The relative error vs rank in the objective value and approximation (the lower, the better).
The x-axis is the ranks ranging from $1$ to $2x$ the theoretical rank upper-bound ($\sqrt{2n}$).
Experiments suggests that a very low rank ($\leq 5$) is sufficeint to achieve nearly optimal approximation error in both the optimization and the randomized rounding process. Further, the error doesn't change much after the theoretical rank upper-bound.}
  \label{fig:maxsat-rank}
\end{figure*}

\section{Conclusion}

In this paper we have presented the Mixing method, a low-rank coordinate descent approach
for solving diagonally constrained semidefinite programming problems.
The algorithm is extremely simple to implement, and involves no free parameters
such as learning rates.  
In theoretical aspects, we have proved that the method converges to a first-order critical point
and all non-optimal critical points are unstable under sufficient rank. 
With a proper step size, the method converges to the global optimum almost surely under random initialization.
This is the first convergence result to the global optimum on the spherical manifold without assumption.
Further, we have shown that the proposed methods admit local linear convergence in the neighborhood of the optimum
regardless of the rank.
In experiments, we have demonstrated the method
on three different application domains: the MAXCUT SDP, a  MAXSAT
relaxation, and a word embedding problem (in the appendix).  In all cases we show positive
results, demonstrating that the method performs much faster than existing approaches from an
optimization standpoint (for MAXCUT and word embeddings), and that the resulting
solutions have high quality from an application perspective (for MAXSAT).  In
total, the method substantially raises the bar as to what applications can be feasibly
addressed using semidefinite programming, and also advances the state of the art
in structured low-rank optimization.

\section*{Acknowledgement}
We thank Gary L. Miller for his helpful discussion on the geometry of the unit sphere,
and Simon S. Du for pointing out many recent references.

\bibliography{sdp}

\newpage
\appendix
\section{Proof of Theorem~\ref{thm:critical mixing}: Convergence to critical points}\label{sec:critical}
\begin{lemma}\label{lemma-sufficient} Let $\hat{V}=M(V)$ for the Mixing method $M:\bR^{k\times n}\rightarrow \bR^{k\times n}$ defined in \eqref{eq:g} and \eqref{eq:M}.
\begin{equation*} f(V)-f(\hat{V})
        = \sum_{i=1}^n y_i\norm{v_i-\hat{v}_i}^2.
\end{equation*}
\end{lemma}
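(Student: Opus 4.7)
\textbf{Proof plan for Lemma~\ref{lemma-sufficient}.}
The plan is to write $f(V) - f(\hat{V})$ as a telescoping sum over the $n$ coordinate updates performed during one sweep of the Mixing method, compute the per-coordinate decrease in closed form using the closed-form update rule $\hat{v}_i = -g_i/y_i$, and then recognize the resulting quantity as $y_i\|v_i-\hat v_i\|^2$.

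First I would fix notation. Since $C$ is symmetric and (w.l.o.g.) $c_{ii}=0$, for any current iterate $W$ the dependence of $f(W)=\sum_{i,j} c_{ij} w_i^T w_j$ on the $i$-th column is exactly $2 w_i^T \sum_{j\neq i} c_{ij} w_j$. Denote by $V^{(i)}$ the matrix in which columns $1,\ldots,i-1$ have already been replaced by $\hat v_1,\ldots,\hat v_{i-1}$ and columns $i,\ldots,n$ are still $v_i,\ldots,v_n$, so that $V^{(1)}=V$ and $V^{(n+1)}=\hat V$. Then by definition \eqref{eq:g}, the vector $g_i$ is precisely $\sum_{j\neq i} c_{ij} w_j$ evaluated at $W=V^{(i)}$, so the only block of $f$ that changes when we replace $v_i$ by $\hat v_i$ is $2 v_i^T g_i \to 2 \hat v_i^T g_i$. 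Thus
\begin{equation*}
  f(V)-f(\hat V) \;=\; \sum_{i=1}^n \bigl(f(V^{(i)}) - f(V^{(i+1)})\bigr) \;=\; \sum_{i=1}^n 2\bigl(v_i^T g_i - \hat v_i^T g_i\bigr).
\end{equation*}

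Next I would plug in the update $\hat v_i = -g_i/y_i$ with $y_i=\|g_i\|$, which gives $\hat v_i^T g_i = -y_i$. Hence each summand equals $2 v_i^T g_i + 2 y_i$. On the other hand, using $\|v_i\|=\|\hat v_i\|=1$,
\begin{equation*}
  y_i \|v_i-\hat v_i\|^2 \;=\; y_i\bigl(2 - 2 v_i^T \hat v_i\bigr) \;=\; 2y_i + 2 v_i^T g_i,
\end{equation*}
matching the per-coordinate decrease. Summing over $i$ yields the claim.

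The main obstacle is purely bookkeeping: one must be careful that $g_i$ in \eqref{eq:g} already uses the freshly updated columns $\hat v_1,\ldots,\hat v_{i-1}$, which is exactly what makes the telescoping decomposition work. Once this indexing is set up correctly, the identity falls out of the fact that both $v_i$ and $\hat v_i$ lie on the unit sphere and that $\hat v_i$ is the exact minimizer of the linear function $w\mapsto 2w^T g_i$ on that sphere.
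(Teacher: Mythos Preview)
Your proposal is correct and follows essentially the same approach as the paper: a telescoping sum over the cyclic coordinate updates, using that the $i$-th partial objective is $2v_i^T g_i$ with $g_i$ defined via the already-updated columns, then the identity $2g_i^T(v_i-\hat v_i)=y_i\|v_i-\hat v_i\|^2$ from $\hat v_i=-g_i/y_i$ and $\|v_i\|=\|\hat v_i\|=1$. The only cosmetic difference is that the paper chains the equalities as $2g_i^T(v_i-\hat v_i)=-2y_i\hat v_i^T(v_i-\hat v_i)=2y_i(1-v_i^T\hat v_i)=y_i\|v_i-\hat v_i\|^2$, whereas you expand both sides separately and match.
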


\begin{proof} 
Recall the objective function $f$ w.r.t.\ variable $v_i$ while fixing all other variables $v_j$ is
        \begin{equation*}
                \mdot{C}{V^TV} = \sum_i\sum_j c_{ij}v_i^Tv_j = 2 v_i^T(\sum_j c_{ij}v_j)+\text{constant}.
        \end{equation*}
Note that the $\sum_j c_{ij}v_j$ term is independent of $v_i$ because $c_{ii}=0$.
Now consider the inner cyclic iteration of the Mixing method updating $v_i$ to $\hat{v}_i$.
Because only those $v_j$ with $j<i$ are updated to $\hat{v}_j$, 
the objective value before updating $v_i$ to $\hat{v}_i$ equals 
        $2v_i^T(\sum_{j<i}c_{ij}\hat{v}_j+\sum_{j>i}c_{ij}v_j) = 2g_i^Tv_i$ plus constants,
Thus, the updates of the Mixing method can be written as
\begin{equation}\label{eq:Mg}
        \hat{v}_i = -g_i/y_i,\quad\text{where }y_i=\norm{g_i}\text{ and }g_i = \sum_{j<i} c_{ij} \hat{v}_j + \sum_{j>i} c_{ij} v_j ,\quad i=1\ldots n.
\end{equation}
and the objective difference after updating $v_i$ to $\hat{v}_i$ is
        \begin{equation*}
                2g_i^T(v_i-\hat{v}_i) = -2\norm{g_i}\hat{v}_i^T(v_i-\hat{v}_i) = 2y_i (1-v_i^T\hat{v}_i) = y_i \norm{v_i-\hat{v}_i}^2.
        \end{equation*}
The result follows from summing above equation over $i=1\ldots n$.
\end{proof} 

\paragraph{Proof of Theorem~\ref{thm:critical mixing}}$ $\\
\begin{proof}
From Lemma~\ref{lemma-sufficient} and the compactness of the space of $V$,
the sequence $\{f(V^r)\}$ for iterate $V^r$ is monotonically decreasing
and converges to a value $\bar{f}$.
The function decrease converges to zero because of Lemma~\ref{lemma-sufficient} and $y_i=\norm{g_i}$ being bounded.
Further, the normalizer $y_i$ does not degenerate due to Assumption~\ref{assume:degenerate},
so we have
        \begin{equation*}
        \lim_{r\appinf} \norm{v^r_i-\hat{v}^r_i}^2 = 0.
        \end{equation*}
That is, every limit point is a fixed point.
Now we prove that every limit point in $\{V^r\}$ is also a critical point, which has zero Riemannian gradient.
The Riemannian gradient on the spheres is defined as
        \begin{equation*}
                \texttt{grad}(V)_i = (I-v_iv_i^T)\tilde{g}_i, \forall i=1,\ldots,n.
        \end{equation*}
        where $\tilde{g}_i=Vc_i$ is the gradient for coordinate block $i$. 
        Then we have
        \begin{equation*}
                \norm{\texttt{grad}(V)_i}^2 = \norm{g_i}^2-(v_i^Tg_i)^2 
                \leq 2\norm{g_i}(\norm{g_i}+v_i^Tg_i) = \norm{g_i}\cdot 2g_i^T(v_i-\hat{v}_i),
        \end{equation*}
where the inequality is from $(\norm{g_i}+v_i^Tg_i)^2\geq 0$.
Note that the RHS term is the function decrease in Lemma~\ref{lemma-sufficient}.
Align the above term to a specific iterate $V^r$ by taking out the difference $v_i^r-\hat{v}_i^r$ in $g_i$,
and taking limit to the inequality, we have    
        \begin{equation*}
                \lim_{r\appinf} \norm{\texttt{grad}(V^{r})}^2 \leq\lim_{r\appinf}\norm{g_i}\cdot 2g_i^T(v_i^r-\hat{v}_i^r) + \sum_i O(\norm{v_i^r-\hat{v}_i^r})= 0,
        \end{equation*}
That is, the Riemannian gradient converges to zero in the limit.
The step-sized version follows from the same argument.
\end{proof}

\section{Proof of Theorem~\ref{thm:Linear mixing}: Local Linear convergence}\label{sec:Linear}
In the following lemma, we use the asymptotic analysis inspired by \cite[Theorem 4]{erdogdu2018convergence},
while removing its assumption by identifying the inherent Rayleigh quotient problem.
\begin{lemma}\label{lemma:VSErrorBnd}
        Define $\hat{y}_i = v_i^TVc_i$, $\forall i=1,\ldots,n$. Let $\hat{S}=C+D_{\hat{y}}$, and let $\bar{f}$ be the objective value of the nearest first-order critical point. 
        Then, there is a neighborhood around the critical point such that
        \begin{equation*}
                \norm{V\hat{S}}^2\geq \frac{\kappa}{2}(f(V)-\bar{f})
        \end{equation*}
        for $V$ with $f(V)\geq \bar{f}$ with a constant $\kappa>0$.
        That is, $\norm{VS}^2$ is a local error bound.
\end{lemma}
\begin{proof}
        Let $\bar{V}$ be the nearest first-order critical point to $V$.
        Consider the geodesic on the spheres around $\bar{V}$ w.r.t.\ a unit tangent direction $U$ such that $u_i^T\bar{v}_i=0$ and $\norm{U}=1$, which is 
        \begin{equation*}
                v_i = \bar{v}_i \cos(\norm{u_i}t) + \frac{u_i}{\norm{u_i}}\sin(\norm{u_i}t), \forall i=1,\ldots,n.
        \end{equation*}
        Taking the Taylor expansion around $\bar{V}$ and using $\bar{V}\bar{s}_i=0$, we have
        \begin{equation*}
                v_i = \bar{v}_i + t u_i + O(t^2)\quad\text{and}\quad
                V\bar{s}_i = t U\bar{s}_i + O(t^2).
        \end{equation*}
        Substitute $\hat{S}=\bar{S}+D_{\hat{y}}-D_{\bar{y}}$ and applying the expansion and , there is
        \begin{align*}
                \norm{V\hat{S}}^2 &= \sum_i\norm{V\bar{s}_i}^2-(v_i^TV\bar{s}_i)^2=t^2 \left(\sum_i \norm{U\bar{s}_i}^2-(\bar{v}_i^TU\bar{s}_i)^2\right) + O(t^3)\\
                &= t^2 \sum_i \norm{(I-\bar{v}_i\bar{v}_i^T)U\bar{s}_i}^2+ O(t^3).
        \end{align*}
        In the other hand, applying the expansion to $f(V)-\bar{f}$, we have
        \begin{equation*}
                f(V)-\bar{f}=\tr(V^TV\bar{S}) %= \tr((\bar{V}+tU)^T(\bar{V}+tU)\bar{S}) + O(t^3) 
                = t^2 \tr(U^TU\bar{S}) + O(t^3).
        \end{equation*}
        Note that $\tr(U^TU\bar{S})\geq 0$, otherwise it contradicts with $f(V)\geq \bar{f}$ when $t$ is small enough.
        Now consider the generalized Rayleigh quotient problem relating the $t^2$ terms in $\norm{V\hat{S}}^2$ and $f(V)-\bar{f}$.
        \begin{equation}
                \kappa := \inf_{U}\; \frac{\sum_i \norm{(I-\bar{v}_i\bar{v}_i^T)U\bar{s}_i}^2}{\tr(U^TU\bar{S})},
                \quad\text{s.t.}\;\tr(U^TU\bar{S})\geq 0,\;\;u_i^T\bar{v}_i=0,\;\forall i.
        \end{equation}
        The numerator equals $\vect(U)^T M \vect(U)$ for an SPSD matrix $M$ because it's a sum-of-squares.
        Further,
        \begin{equation*}
                (I-\bar{v}_i\bar{v}_i^T)U\bar{s}_i = 0\;\implies\; \exists \theta,\; U\bar{s_i} =\theta v_i\;\implies u_i^TU\bar{s}_i = \theta u_i^T\bar{v}_i = 0.
        \end{equation*}
        Thus, $\tr(U^TU\bar{S})$ contains all the null of $\sum_i \norm{(I-\bar{v}_i\bar{v}_i^T)U\bar{s}_i}^2$.
        So $\kappa\geq \sigma_{\minnz}(M)/\sigma_{\max}(\bar{S})>0$ because $\tr(U^TU\bar{S})\geq 0$.
        Together, there is
        \begin{equation*}
                \norm{V\hat{S}}^2 = \frac{\sum_i \norm{(I-\bar{v}_i\bar{v}_i^T)U\bar{s}_i}^2}{\tr(U^TU\bar{S})} (f(V)-\bar{f}) + O(t^3) \geq \frac{\kappa}{2} (f(V)-\bar{f})
        \end{equation*}
        in the neighborhood of $\bar{V}$ when $t$ is small enough.
\end{proof}

\begin{lemma}\label{lemma:VShat}
        Let $S=C+D_y$ for arbitrary $y$ and $\hat{S}=C+D_{\hat{y}}$ with $\hat{y}_i = -v_i^TVc_i$. Then
                $\norm{VS}^2\geq \norm{V\hat{S}}^2$.
\end{lemma}
\begin{proof}
        Note that
        \begin{equation*}
                v_i^TV\hat{s}_i = v_i^T(Vc_i - (v_i^TVc_i) v_i) = 0.
        \end{equation*}
        Using this property,
        \begin{equation*}
                \norm{VS}^2 = \norm{V\hat{S}+V(D_y-D_{\hat{y}})}^2 = \norm{V\hat{S}}^2 + \norm{D_y-D_{\hat{y}}}^2 \geq \norm{V\hat{S}}^2.
        \end{equation*}
        Thus, the result holds.
\end{proof}
\begin{lemma}\label{lemma:LipschitzM}
        Under Assumption~\ref{assume:degenerate}, the Mixing method $M:\bR^{k\times n}\rightarrow \bR^{k\times n}$ satisfies
        \begin{equation}\label{eq:LipschitzM:1}
                \norm{V-M(V)}^2 \geq \frac{1}{y_{\max}^2}\norm{V\hat{S}}^2.
        \end{equation}
\end{lemma}
\begin{proof*}
        Let $S=C+D_y$ with $y=y(V)$.
        Under the notation in \eqref{eq:M}, 
        \begin{equation}
                V-M(V) = V(L^T+D_y)(L^T+D_y)^{-1}+VL(L^T+D_y)^{-1} = VS(L^T+D_y)^{-1}\no.
        \end{equation}
        For simplicity, let $R:=(L^T+D_y)^{-1}$. Then
        \begin{equation}
                \norm{V-M(V)}^2 = \norm{VSR}^2 \geq \sigma_{\minnz}^2(R)\norm{VS}^2 \geq \sigma_{\minnz}^2(R)\norm{V\hat{S}}^2.
        \end{equation}
        The last inequality follows from Lemma~\ref{lemma:VShat}. 
        The result holds with $\sigma_{\minnz}^2(R)=1/y_{\max}^2$.
\end{proof*}

\begin{lemma}\label{lemma:LipschitzMS}
The Mixing method $M_\theta:\bR^{k\times n}\rightarrow \bR^{k\times n}$ with step size $\theta$ satisfies
        \begin{equation}\label{eq:LipschitzMS:3}
                \norm{V-M_\theta(V)}^2 \geq \frac{\theta^2}{y_{\max}^2}\norm{V\hat{S}}^2.
        \end{equation}
\end{lemma}
\begin{proof*}
        Let $S=C+\frac{1}{\theta}(D_y-I)$ because the normalizer $y_i=\norm{v_i-\theta g_i}$. By the derivation in \eqref{eq:MS}, we have
        \begin{equation*}
                V-M_\theta(V) = V-V(I_n-\theta L)(\theta L^T+D_{y})^{-1} = V(D_{y} -I_n + \theta C)(\theta L^T+D_{y})^{-1} = \theta VS(\theta L^T+D_y)^{-1}.
        \end{equation*}
        Following the same analysis in Lemma~\ref{lemma:LipschitzM}, 
        the result holds.
\end{proof*}

\paragraph{Proof of Theorem~\ref{thm:Linear mixing}} $ $\\
\begin{proof}
        By Lemma~\ref{lemma:VSErrorBnd} and Lemma~\ref{lemma:LipschitzM},
        there is a neighborhood around $\bar{V}$ such that
        \begin{equation*}
                \norm{V-M(V)}^2 \geq \frac{\kappa}{2y_{\max}^2}(f(V)-\bar{f}).
        \end{equation*}
        Together with Lemma~\ref{lemma-sufficient} (under Assumption~\ref{assume:degenerate}),
        \begin{equation}
                f(V)-f(M(V)) \geq \frac{y_{\minnz}\kappa}{2y_{\max}^2} (f(V)-f^*) \implies (1-\frac{y_{\minnz}\kappa}{2y_{\max}^2})(f(V)-f^* ) \geq f(M(V))-\bar{f}.\no
        \end{equation}
        That is, the Mixing method $M$ converges R-linearly to the $\bar{f}$ in the neighborhood of the critical point. Moreover, we know the method always reaches the neighborhood by Theorem~\ref{thm:critical mixing}.
        The same local linear convergence follows for the Mixing method $M_\theta$ with step size $\theta\in(0,\tfrac{1}{\max_i \norm{c_i}_1})$ from Lemma~\ref{lemma:squaredDecreaseMS} (no assumption) and Lemma~\ref{lemma:LipschitzMS}.

        Further, because $f(V)-\bar{f}$ is converging to zero exponentially, there is a $\delta\in(0,1)$ such that $f(V^r)-\bar{f}=O(\delta^r)$ for all large enough $r$.
        Consequently, $f(V^r)-f(M(V^r))=O(\delta^r)$ and $\norm{V-M(V)}^2=O(\delta^r)$ follows,
        so we have
        \begin{equation*}
                \norm{V^r-\bar{V}} \leq \sum_{t=r}^\infty \norm{V^t-V^{t+1}} = O(\sum_{t=r}^\infty \delta^{t/2})
                = O(\delta^{r/2} /(1-\sqrt{\delta})).
        \end{equation*}
        That is, the solution converges to a critical point Q-linearly.
\end{proof}

\section{Proof of Lemma~\ref{lemma:GSspectral}: Divergence of Gauss-Seidel methods}\label{sec:GSspectral}

\begin{proof}
        Because the dynamics of the Gauss-Seidel method (GS) on the system 
        \begin{equation*}
                \min_{x\in\bR^n}\; f(x),\quad\text{where } f(x)\equiv x^T Sx,
        \end{equation*}
        has the same Jacobian as $J_{GS}$,
        proving $\rho(J_{GS})>1$ is equivalent to proving the 
        ``linear divergence'' of the Gauss-Seidel method, which cyclically optimizes each coordinate of $x\in\bR^n$.
        Further, since $S\not\succeq 0$, there is an eigenvector $q\in\bR^n$ of $S$ such that $q^T Sq <0$.

        Consider the sequence $\{x_r\}_{r=0,1,\ldots}$ generated by the GS.
        That is, $x_r=(J_{GS})^rx_0,\;\forall r>0$, where $(J_{GS})^r$ is $J_{GS}$ to the $r$-th power.
        Let the initial solution of the system be $x_0=q$ so that $f(x_0)<0$.
        Because the Gauss-Seidel method is greedy in every coordinate updates,
        it is monotonically deceasing in the function value.
        Thus, there are only two cases for the sequence of function values: 1) the function value converges below zero; 2) the function value goes to negative infinity.

        Denote $z^r_i$ the $x_r$ before updating the $i$-th
        coordinate and let $z^r_1 = x_r$ and $z^r_{n+1}=x_{r+1}$.
        This way, only the $i$-th coordinate between $z^r_i$ and $z^r_{i+1}$ is changed and the inner cyclic updates can be flattened as
        \begin{equation}\label{eq:GSspectral:3}
                x_r=z^r_1\rightarrow z^r_2\rightarrow\ldots\rightarrow z^r_n\rightarrow z^r_{n+1} = x_{r+1}.
        \end{equation}

        \paragraph{1) When the function value converges.} The monotonic decreasing property of GS
        implies that the function difference
        converges to zero. By the same analysis in
        Lemma~\ref{lemma-sufficient},\footnote{We can obtain the
                result by fixing $y$ in Lemma~\ref{lemma-sufficient} to be a constant and let $V\in \bR^{1\times n}$.
                The result can also be obtained by examining the coordinate
        updates of GS, which is already known in the literature.} we have
        \begin{equation*}
                f(x_{r}) - f(x_{r+1}) = \sum_{i=1}^ny_i\norm{z^r_i-z^r_{i+1}}^2.
        \end{equation*}
        Thus, the flattened sequence $\{z_i^r\}$ convergences, which implies $\{x^r\}$ also converges.
        Let $\bar{x}$ be the limit of the sequence $\{x^r\}$.
        Being a limit of the GS sequence means that $\bar{x}$ is a fixed point
        of GS,
        which implies $S\bar{x}=0$ and $f(\bar{X})=\bar{x}^T S\bar{x}=0$. 
        This contradicts with the monotonic decreasing property of GS and
        the fact that $f(x_0) < 0$.

        \paragraph{2) When the function value $x_r^T Sx_r$ goes to negative
        infinity.} Because the spectrum of $S$ is bounded, we
        know that $\norm{x_r}$ also goes to infinity.
        For simplicity, we focus on the $r$-th iterate and 
        write $z_i^r$ as $z_i$. From the GS, $z_{i,i}$ is updated to $z_{i+1,i}=\frac{-1}{y_i}\sum_j c_{ij}z_{j,i}$, and
        we have
        \begin{equation}\label{eq:zgrad}
                f(z_i)-f(z_{i+1})=y_i\norm{z_i-z_{i+1}}^2=y_i(z_{i,i}+\frac{1}{y_i}(\sum_j c_{ij} z_{j,i}))^2 = |e_i^T Sz_i|^2/y_i,
        \end{equation}
        where $e_i$ is the $i$-th coordinate vector and the first equality is from $f(x)=x^TSx$. 
        Then we have the following claim from
        \citet[cliam 1]{lee2017first}.
        \begin{claim}\label{claim}
                Assume $x_r$ be in the range of $S$. There exists an index $j$ such that $\frac{1}{y_j}|e_j^T S z_j|\geq \omega\norm{z_j}$ for some global constant $\omega>0$ that only depends on $S$ and $n$.

        \end{claim}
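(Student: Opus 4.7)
The plan is a compactness-plus-contradiction argument exploiting two structural facts: the inequality is homogeneous in $z_1=x_r$, so I may renormalize to $\norm{z_1}=1$; and since $S$ is symmetric, $\textnormal{range}(S)$ and $\textnormal{null}(S)$ are orthogonal complements, so $K:=\{z\in\textnormal{range}(S):\norm{z}=1\}$ is a nonempty compact set disjoint from $\textnormal{null}(S)$. On $K$ I will build a continuous function capturing the quantity in the claim, show it is strictly positive pointwise, and then invoke compactness to get a uniform lower bound.

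First I would note that a full cyclic sweep $z_1\mapsto(z_1,\ldots,z_n)$ is a smooth (in fact linear, by the triangular-inverse form $z_{n+1}=-(L+D_y)^{-1}L^T z_1$ underlying \eqref{eq:M}) mapping of $z_1$, and that the inner iterates stay uniformly bounded: from $z_{j+1}=z_j-\tfrac{1}{y_j}(Sz_j)_j e_j$ one gets $\norm{z_{j+1}}\leq (1+\norm{S}_{\textnormal{op}}/y_{\min})\norm{z_j}$, hence $\norm{z_j}\leq C\norm{z_1}$ for a constant $C=C(S,n)$. With this in hand, define on $K$ the continuous function
\begin{equation*}
    \psi(z_1) \;=\; \max_{1\leq j\leq n}\frac{1}{y_j}\bigl|e_j^T Sz_j\bigr|.
\end{equation*}

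The key step is pointwise positivity of $\psi$ on $K$. Suppose $\psi(z_1)=0$ for some $z_1\in K$. Then $e_j^T Sz_j=0$ for every $j$, so by the Gauss--Seidel update $z_{j+1}=z_j$ throughout the entire sweep; hence $z_j=z_1$ for all $j$, and $(Sz_1)_j=e_j^T Sz_j=0$ for all $j$, forcing $z_1\in\textnormal{null}(S)$. Combined with $z_1\in\textnormal{range}(S)$, this gives $z_1=0$, contradicting $\norm{z_1}=1$.

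By continuity of $\psi$ and compactness of $K$, $\omega':=\min_{K}\psi>0$. Picking the index $j$ attaining $\psi(z_1)$ and using $\norm{z_j}\leq C$ gives
\begin{equation*}
    \frac{1}{y_j}\bigl|e_j^T Sz_j\bigr| \;\geq\; \omega' \;\geq\; \frac{\omega'}{C}\,\norm{z_j},
\end{equation*}
so setting $\omega:=\omega'/C$, a quantity depending only on $S$ and $n$, completes the argument. The main obstacle I anticipate is precisely the positivity step, which critically relies on $\textnormal{range}(S)\cap\textnormal{null}(S)=\{0\}$ supplied by the symmetry of $S$; without that identity, a sweep-fixed point need not vanish and the compactness argument would collapse.
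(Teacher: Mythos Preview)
Your proof is correct and takes a genuinely different route from the paper's. The paper (following \cite{lee2017first}) argues by contradiction with explicit constants: assuming $\frac{1}{y_j}|e_j^T Sz_j|<\omega\norm{z_j}$ for every $j$, it shows inductively that $\norm{x_r-z_j}<2(j-1)\omega\norm{x_r}$, then bounds each $|e_j^T Sx_r|$ by a multiple of $\omega\norm{x_r}$, and sums these to get $\norm{Sx_r}$ small, contradicting the lower bound $\norm{Sx_r}\geq\kappa_{\minnz}(S)\norm{x_r}$ that comes from $x_r\in\textnormal{range}(S)$. Your compactness argument is cleaner and exposes the mechanism transparently---a sweep with all residuals vanishing forces $Sz_1=0$, and symmetry of $S$ then kills $z_1$---but it is nonconstructive: the paper's proof yields an explicit (if unwieldy) choice of $\omega$ in terms of $\rho(S)$, $\kappa_{\minnz}(S)$, $y_{\min}$, $y_{\max}$, and $n$, whereas your $\omega'$ is an abstract infimum over a compact set. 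Both proofs ultimately rest on the same structural fact $\textnormal{range}(S)\cap\textnormal{null}(S)=\{0\}$, so neither gains additional generality; the trade-off is purely between conceptual simplicity and an explicit constant.
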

        The full proof of the claim is listed after this lemma for completeness.
        To fulfill the assumption, we can decompose
        $x_r=x_r^{\shortpar}+x_r^{\bot}$, where $x_r^{\shortpar}$ is in the
        range of $S$ and $x_r^{\bot}$ is in the null of $S$.
        Consider the flattened inner cyclic update $z^\shortpar_i$ 
        like \eqref{eq:GSspectral:3} but starting from $x_r^\shortpar$ such that\footnote{Note that only
        $x_r$ is decomposed to $x_r^\shortpar$ in $\textnormal{range}(S)$ and $x_r^\bot$ in $\textnormal{null}(S)$. Symbols $z_i^\shortpar$
        are the GS iterates generated from $x_r^\shortpar$ and might not be
        in the range of $S$.}
        \begin{equation*}
                x_r^\shortpar=z_1^\shortpar\rightarrow z_2^\shortpar \rightarrow \ldots 
                \rightarrow z_n^\shortpar\rightarrow z_{n+1}^\shortpar.
        \end{equation*}
        Because $J_{GS}$ map the null space of $S$ to
        itself,\footnote{Consider $p$ such that $Sp=0$.
        Then $(J_{GS})p = -(L+y)^{-1}L^T p = -(L+y)^{-1}Sp+p = p$.}
        \begin{equation*}
        f(x_r)-f(x_{r+1}) =
                f(x_r^\shortpar)-f(J_{GS}(x_r^\shortpar+x_r^\bot)) =
                f(z_1^\shortpar)-f(z_{n+1}^\shortpar+x_r^\bot) =
                f(z_1^\shortpar)-f(z_{n+1}^\shortpar).
        \end{equation*}
        Further, because GS is coordinate-wise monotonic decreasing and the function
        decrease of a coordinate update is smaller than the whole cyclic update, 
        by above equality and \eqref{eq:zgrad} we have
        \begin{equation*}
                f(x_r)-f(x_{r+1}) = f(z_1^\shortpar)-f(z_{n+1}^\shortpar)
                \geq \frac{(e_j^T S z_j^\shortpar)^2}{y_j}
                \geq y_j\omega^2 \norm{z_j^{\shortpar}}^2.
        \end{equation*}
        The last inequality is from Claim~\ref{claim}.
        Thus,
        \begin{equation}\label{eq:suffdec}
                f(x_{r+1}) \leq
                f(x_r) - y_j\omega^2\norm{z_j^\shortpar}^2 
        \end{equation}
        Further, because $\norm{z_j^\shortpar}^2 \geq |z_j^{\shortpar
        T}Sz_j^\shortpar|/\rho(S)$ and $z_j^{\shortpar T}Sz_j^\shortpar = f(z_j^\shortpar)\leq f(x_r^\shortpar) = f(x_r) \leq f(x_0) < 0$,
        \begin{equation}
                f(x_r) - y_j\omega^2\norm{z_j^\shortpar}^2 
                \leq f(x_r) + \frac{y_j\omega^2}{\rho(S)} z_j^{\shortpar
                T}Sz_j^\shortpar \leq
                (1+\frac{y_{min}\omega^2}{\rho(S)})f(x_r).
                \label{eq:expand}
        \end{equation}
        Combining \eqref{eq:suffdec} and \eqref{eq:expand}, we obtain the exponential divergence to negative infinity
        \begin{equation}\label{eq:GSspectral:1}
                f(x_{r+1}) \leq (1+\frac{y_{min}\omega^2}{\rho(S)})f(x_r) \leq
                (1+\frac{y_{min}\omega^2}{\rho(S)})^{r+1} f(x_0),\quad\forall r\geq 0.
        \end{equation}
        The last inequality is from applying the first inequality recursively.
        Because $S\succeq \sigma_{\min}(S)I_n$ and $\sigma_{min}(S)<0$,
        \begin{equation}\label{eq:GSspectral:2}
                f(x_r) = ((J_{GS})^rx_0)^TS((J_{GS})^rx_0) \geq \sigma_{min}(S) \norm{(J_{GS})^r x_0}^2 \geq \sigma_{min}(S)\norm{(J_{GS})^r}^2 \norm{x_0}^2.
        \end{equation}
        Combining \eqref{eq:GSspectral:1} and \eqref{eq:GSspectral:2}, we have
        \begin{equation*}
                \sigma_{\min}(S)\norm{(J_{GS})^r}^2\norm{x_0}^2 \leq \left(1+\frac{y_{\min}\omega^2}{\rho(S)}\right)^rf(x_0), \quad\forall r>0.
        \end{equation*}
        Applying Gelfand's theorem for spectral radius, we conclude that
        \begin{equation*}
                \rho(J_{GS}) = \lim_{r\appinf} \norm{(J_{GS})^{r}}^{1/r} \geq \sqrt{1+\frac{y_{min}\omega^2}{\rho(S)},}
        \end{equation*}
        which means that the spectral radius $\rho(J_{GS})$ is strictly larger
        than $1$.
        \end{proof}

\paragraph{Proof of the Claim~\ref{claim} in the above lemma.} \label{sec:GSspectral}
Note that the following proof is essentially the same with \citet[cliam 1]{lee2017first}, where their $\alpha$ is our $\frac{1}{y_i}$ and their $y_t$ is our $x_r$. 
The only difference here is that we prove the result for the exact Gauss-Seidel method, and they prove the result for the coordinate gradient descent method with a step size. The proof is listed here for completeness.
\begin{proof}
        We will prove by contradiction. Assume that 
        \begin{equation}\label{eq:assumeS}
                \frac{1}{y_j}|e_j^T S z_j|<\omega\norm{z_j}
        \text{ for all }j=1\ldots n \text{ for certain }\omega.
        \end{equation}
        Now we show the following result by induction, that for $j=2\ldots n+1$,
        \begin{equation}\label{eq:inddiff}
                \norm{x_r-z_{j}}<2(j-1)\omega\norm{x_r}.
        \end{equation}
        Remember from \eqref{eq:zgrad} we have
        \begin{equation}\label{eq:zgrad2}
                y_j\norm{z_j-z_{j+1}}^2 = |e_j^T Sz_j|^2/y_j.
        \end{equation}
        For $j=2$, we have the induction basis for \eqref{eq:inddiff} from the above equality and \eqref{eq:assumeS}, that
        \begin{equation*}
                \norm{x_r-z_2} = \norm{z_1-z_2} = \frac{1}{y_1}|e_1^T S z_1| < \omega \norm{z_1} = \omega \norm{x_r} < 2\omega\norm{x_r},
        \end{equation*}
        and accordingly $\norm{z_2} \leq \norm{z_2-z_1}+\norm{z_1} < (1+2\omega)\norm{x_r}$.
        Now we do the induction. Suppose the hypothesis \eqref{eq:inddiff} holds for a $j$. This implies 
        \begin{equation}\label{eq:indz}
                \norm{z_j}\leq\norm{z_j-x_r}+\norm{x_r}<(1+2(j-1)\omega)\norm{x_r}.
        \end{equation}
        Then at $j+1$,
        \begin{align*}
                \norm{x_r-z_{j+1}} &\leq \norm{x_r-z_j} +
                \norm{z_j-z_{j+1}}&&\text{by the triangular inequality}\\
                &< 2(j-1)\omega\norm{x_r} + \frac{1}{y_j} |e_j^T S z_j|&&\text{by hypothesis \eqref{eq:inddiff} at $j$ and \eqref{eq:zgrad2}}\\
                &< 2(j-1)\omega\norm{x_r} + \omega \norm{z_j}&&\text{by assumption \eqref{eq:assumeS}}\\
                &< 2(j-1)\omega\norm{x_r} +
                \omega(1+2(j-1)\omega)\norm{x_r}&&\text{by \eqref{eq:indz}}\\
                                  &\leq 2j\omega\norm{x_r},
        \end{align*}
        where the last inequality holds from picking $\omega\in(0,\frac{1}{2n})$ so that $\omega(1+2(j-1)\omega-2)<0$.
        Thus, the induction on \eqref{eq:inddiff} holds. With the result \eqref{eq:inddiff}, for $j=2\ldots n$ we have
        \begin{align}
                \frac{1}{y_{max}}|e_j^T Sx_r| 
                &\leq \frac{1}{y_j}|e_j^T Sx_r|&&\text{by $y_{max}\geq y_j$}\no\\
                &\leq \frac{1}{y_j} (|e_j^T Sz_j|+|e_j^TS(x_r-z_j)|)&&\text{by the triangular inequality}\no\\
                &< \omega \norm{z_j} + \frac{1}{y_j} \norm{Se_j}\norm{x_r-z_j}&&\text{by \eqref{eq:assumeS} and Cauchy inequality}\no\\
                &< \omega(1+2(j-1)\omega)\norm{x_r}+\frac{1}{y_j} 2(j-1)\omega\norm{Se_j} \norm{x_r}&&\text{by \eqref{eq:indz} and \eqref{eq:inddiff}}\no\\
                &\leq \omega(1+2n\omega+2n\frac{1}{y_{min}}\rho(S))\norm{x_r}\label{eq:xrBnd},
        \end{align}
        where the last inequality is from $\norm{Se_j}\leq \norm{S}\norm{e_j}\leq \rho(S)$,
        $y_{\min}\leq y_j$, and $j\leq n+1$.
        Note that the result of \eqref{eq:xrBnd} for $j=1$ also holds because \eqref{eq:assumeS} and $x_r=z_1$.
        Summing the square of \eqref{eq:xrBnd} over $j=1\ldots n$ and put it in a
        square root, we have 
        \begin{equation*}
        \sqrt{n}\omega(1+2n\omega+2n\frac{\rho(S)}{y_{min}})\norm{x_r}
        >
                \frac{1}{y_{max}}\norm{Sx_r} \geq \kappa_{\minnz}(S)\norm{x_r},
        \end{equation*}
        where $\kappa_{\minnz}(S)=\sqrt{\sigma_{\minnz}(S^TS)}>0$ is the minimum nonzero singular value of $S$ and the last inequality holds 
        because $\kappa_{\minnz}(S)\norm{x_r} \leq \norm{Sx_r}$ from $x_r\in\text{range}(S)$.
        Cancelling $\norm{x_r}$ from both sides of the above inequality, the left-hand side goes to $0$ when $\omega\rightarrow 0$ but the right-hand side stays constant.
        Thus, picking small enough $\omega$ such that $\kappa_{\minnz}(S) \geq y_{max}\sqrt{n}\omega (1+2n\omega+2n\frac{\rho(S)}{y_{min}})$
        leads to a contradiction.\footnote{Note that the choice of $\omega$ only depends on $n$ and $S$.} So the claim holds.
\end{proof}

\section{Proof of Theorem~\ref{thm:globalConvStepsize}: Global convergence with a step size}\label{sec:globalConvStepsize}
\begin{lemma} \label{lemma:MSnondegenerate}
        The Mixing method $M_\theta$ with a step size $\theta\in(0, \frac{1}{\max_i \norm{c_i}_1})$ never degenerates.
        That is, there is a constant $\delta\in(0,1)$ such that
        \begin{equation}
                \norm{\theta Vc_i}\leq 1-\delta<1\quad\text{and}\quad\norm{v_i-\theta Vc_i} \geq \delta>0.\no
        \end{equation}
\end{lemma}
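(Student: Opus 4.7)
The plan is to reduce both inequalities to a single application of the triangle inequality, exploiting the fact that every column $v_j$ of the current iterate lies on the unit sphere (this is maintained by the normalization step of $M_\theta$, and holds at initialization by construction). Given this, the quantity $Vc_i=\sum_j c_{ij}v_j$ is a weighted sum of unit vectors with weights $c_{ij}$, and its norm admits a data-only bound.

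First I would define the candidate constant
\begin{equation*}
  \delta \;=\; 1-\theta\max_{i}\norm{c_i}_1,
\end{equation*}
and observe that $\delta\in(0,1)$ since $\theta\in(0,1/\max_i\norm{c_i}_1)$. Then, using $\norm{v_j}=1$ and $c_{ii}=0$, the triangle inequality gives
\begin{equation*}
  \norm{\theta Vc_i} \;=\; \theta\,\Bigl\|\sum_{j}c_{ij}v_j\Bigr\| \;\leq\; \theta\sum_{j}|c_{ij}|\norm{v_j} \;=\; \theta\norm{c_i}_1 \;\leq\; \theta\max_{i}\norm{c_i}_1 \;=\; 1-\delta,
\end{equation*}
which is precisely the first bound. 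For the second bound, the reverse triangle inequality together with $\norm{v_i}=1$ yields
\begin{equation*}
  \norm{v_i-\theta Vc_i} \;\geq\; \norm{v_i}-\norm{\theta Vc_i} \;\geq\; 1-(1-\delta) \;=\; \delta,
\end{equation*}
so the same $\delta$ works for both statements simultaneously.

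There is essentially no obstacle here: the only subtlety worth double-checking is whether we are applying the bound to the ``right'' $V$, since the Mixing method overwrites coordinates in place, so the relevant sum inside the update for coordinate $i$ is actually $g_i=\sum_{j<i}c_{ij}\hat{v}_j+\sum_{j>i}c_{ij}v_j$ rather than $Vc_i$. Because all of $\hat{v}_j$ (already normalized by the earlier inner steps) and $v_j$ still have unit norm and $c_{ii}=0$, the identical triangle-inequality argument bounds $\norm{\theta g_i}\leq\theta\norm{c_i}_1\leq 1-\delta$ and hence $\norm{v_i-\theta g_i}\geq\delta$, so the non-degeneracy $y_i\geq\delta$ used in the definition of $M_\theta$ holds uniformly along the trajectory. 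This justifies invoking Assumption~\ref{assume:degenerate} for the step-size variant without imposing it as an additional hypothesis.
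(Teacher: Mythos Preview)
Your proof is correct and essentially identical to the paper's: both set $\delta=1-\theta\max_i\|c_i\|_1$, bound $\|\theta Vc_i\|$ via the triangle inequality using $\|v_j\|=1$, and then obtain the second bound from the reverse triangle inequality. Your extra paragraph noting that the same argument applies to $g_i$ (with mixed old/new coordinates) is a nice clarification the paper leaves implicit.
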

\begin{proof}
        Taking a constant $\theta\in(0, \frac{1}{\max_i\norm{c_i}_1})$
	is equivalent to taking $\theta=\frac{1-\delta}{\max_i\norm{c_i}_1}$ for a constant $\delta\in(0,1)$.
        From the triangular inequality,	
        \begin{equation}
		 \norm{Vc_i}=\norm{\sum_j c_{ij}v_j}\leq \sum_j|c_{ij}|\norm{v_j}=\norm{c_i}_1.\no
        \end{equation}
        So we have $\norm{\theta Vc_i}\leq 1-\delta<1$. The second result follows from  $\norm{v_i-\theta Vc_i}\geq 1-\norm{\theta Vc_i}$.
\end{proof}
\begin{lemma}\label{lemma:diffeomorphism} The Mixing method $M_\theta$ with a step size $\theta\in(0, \frac{1}{\max_i\norm{c_i}_1})$ is a diffeomorphism.
\end{lemma}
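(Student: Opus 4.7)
The plan is to verify the three properties that define a diffeomorphism of the product sphere $(S^{k-1})^n$: smoothness of $M_\theta$, bijectivity, and smoothness of $M_\theta^{-1}$. Smoothness of $M_\theta$ itself is immediate from inspection of \eqref{eq:MS}: each cyclic coordinate update $v_i\mapsto(v_i-\theta g_i)/\|v_i-\theta g_i\|$ is rational in $V$, and Lemma~\ref{lemma:MSnondegenerate} already bounds the denominators $\|v_i-\theta g_i\|\geq\delta>0$, so the $n$-fold composition is $C^\infty$.

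The core idea is to construct $M_\theta^{-1}$ explicitly by reversing the cyclic order. Crucially, when the forward sweep updates coordinate $i$, the auxiliary vector $g_i=\sum_{j<i}c_{ij}\hat{v}_j+\sum_{j>i}c_{ij}v_j$ does not involve $v_i$ itself. Starting from $\hat{V}$, I would therefore recover $v_n,v_{n-1},\ldots,v_1$ sequentially: at stage $i$ the entries $\hat{v}_1,\ldots,\hat{v}_{i-1}$ are read off from $\hat{V}$ while $v_{i+1},\ldots,v_n$ have been reconstructed in previous stages, so $g_i$ is known. The task thus reduces to showing that the single-coordinate map $v_i\mapsto(v_i-\theta g_i)/\|v_i-\theta g_i\|$, with $g_i$ treated as a parameter satisfying $\theta\|g_i\|<1$, is a diffeomorphism of $S^{k-1}$. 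Writing the preimage as $v_i=\alpha\hat{v}_i+\theta g_i$ with $\alpha>0$ and imposing $\|v_i\|=1$ yields the scalar quadratic
\begin{equation*}
\alpha^2+2\theta(\hat{v}_i^T g_i)\alpha+\theta^2\|g_i\|^2-1=0,
\end{equation*}
whose unique positive root is $\alpha=-\theta(\hat{v}_i^T g_i)+\sqrt{\theta^2(\hat{v}_i^T g_i)^2+1-\theta^2\|g_i\|^2}$.

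The step-size hypothesis together with $\|v_j\|=1$ gives $\theta\|g_i\|\leq\theta\|c_i\|_1<1$, so the discriminant $\theta^2(\hat{v}_i^T g_i)^2+1-\theta^2\|g_i\|^2\geq 1-\theta^2\|c_i\|_1^2$ is bounded below by a positive constant. Hence $\alpha$ is a $C^\infty$ function of $(\hat{v}_i,g_i)$, making each single-coordinate inversion smooth. Composing the $n$ reverse inversions, in which $g_i$ at stage $i$ depends smoothly (in fact linearly) on the previously recovered vectors and on $\hat{V}$, produces a smooth $M_\theta^{-1}$; the identities $M_\theta\circ M_\theta^{-1}=\mathrm{id}=M_\theta^{-1}\circ M_\theta$ then follow by construction of each stage as the set-theoretic inverse of the corresponding forward update.

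The main obstacle I anticipate is establishing that the single-coordinate map is a genuine diffeomorphism of $S^{k-1}$ rather than merely a continuous injection. The critical point is the global smoothness of the positive branch of $\alpha$: if the discriminant above were allowed to touch zero anywhere, the square root would produce a branch-point singularity and the inverse would fail to be $C^\infty$. The step-size condition $\theta<1/\max_i\|c_i\|_1$ is precisely what separates the discriminant from zero uniformly over the compact product sphere, ruling this out and making the whole inversion scheme work.
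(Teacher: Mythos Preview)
Your proposal is correct and follows essentially the same approach as the paper: both decompose $M_\theta$ into the $n$ single-coordinate updates, reduce the question to inverting the map $v\mapsto (v-g)/\norm{v-g}$ with $\norm{g}<1$, and write the preimage as $\alpha\hat v+g$ where $\alpha$ is the unique positive root of the same quadratic, with smoothness coming from the positive lower bound on the discriminant guaranteed by Lemma~\ref{lemma:MSnondegenerate}. The only cosmetic difference is that the paper phrases it as ``each $\Phi^i$ is a diffeomorphism, hence so is the composition,'' whereas you spell out the reverse sweep $v_n,\ldots,v_1$ explicitly; the content is the same.
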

\begin{proof}
        Note that $M_\theta$ can be decomposed to
        \begin{equation*}
                M_\theta(V) = \Phi^n(\Phi^{n-1}(\ldots\Phi^1(V))),
        \end{equation*}
        where the column update $\Phi^i(V):\bR^{k\times n}\rightarrow\bR^{k\times n}$ is defined as
        \begin{equation*}
                (\Phi^i(V))_{s=1\ldots n} = \begin{cases}
                        \frac{v_i - \theta Vc_i}{\norm{v_i - \theta Vc_i}}&\text{if }s=i\\
                        v_s&\text{otherwise}.
                \end{cases}
        \end{equation*}
        Thus, if we can prove that $\Phi^i(V)$ is a diffeomorphism for $i=1\ldots n$, 
        then $M_\theta$ is a diffeomorphism because compositions of diffeomorphisms are still diffeomorphism.
        Specifically, because all variables $v_j$ except for $v_i$ are given and stay the same,
        proving diffeomorphism of $\Phi^i(V)$ is equivalent to proving the diffeomorphism of the projective mapping $\phi:\bR^n\rightarrow\bR^n$
        \begin{equation*}
                \phi(v) = \frac{v-g}{\norm{v-g}},
        \end{equation*}
        where $g=\theta Vc_i$ is known. Let $\phi(v) = z$.
        We claim the inverse function $\phi^{-1}(z)$ is
        \begin{equation*}
        \phi^{-1}(z) = \alpha z+g,
        \quad\text{where }\;\alpha =-z^T g+\sqrt{(z^Tg)^2+1-\norm{g}^2}.
        \end{equation*}
        The square root is valid because of Lemma~\ref{lemma:MSnondegenerate}.
        We prove the claim by validation. First,
        \begin{equation*}
                \phi^{-1}(\phi(v)) = \phi^{-1}\left(\frac{v-g}{\norm{v-g}}\right).
        \end{equation*}
        By using the property that $\norm{v}=1$, the $\alpha$ for the above function is
        \begin{align*}
                \alpha &= \frac{-(v-g)^T g}{\norm{v-g}} + \sqrt{\left(\frac{(v-g)^T g}{\norm{v-g}}\right)^2+1-\norm{g}^2}\\
                       &= \frac{1}{\norm{v-g}}\left(-(v-g)^T g + \sqrt{((v-g)^T g)^2 + \norm{v-g}^2(1-\norm{g}^2)}\right)\\
                       &= \frac{1}{\norm{v-g}}\left(-v^T g+\norm{g}^2+1-v^T g\right)\\
                       &= \norm{v-g}.
        \end{align*}
        Thus, $\phi^{-1}(\phi(v)) = \norm{v-g} \frac{v-g}{\norm{v-g}}+g = v$
        is indeed the inverse function.
        The diffeomorphism follows from the smoothness of $\phi(v)$ and $\phi^{-1}(z)$ when $\norm{v-g}\geq \delta>0$ by Lemma~\ref{lemma:MSnondegenerate}.
\end{proof}
\vspace{-0.3cm}
\begin{lemma}\label{lemma:squaredDecreaseMS}For the Mixing method $M_\theta$ with step size $\theta\in(0,\frac{1}{\max_i\norm{c_i}})$,
        let $\hat{V}=M_\theta(V)$. Following the notation in \eqref{eq:g} and \eqref{eq:MS}, we have
        \begin{equation*}
                f(V)-f(\hat{V}) = \sum_i \frac{1+y_i}{\theta}\norm{v_i-\hat{v}_i}^2,\no
        \end{equation*}
        where $y_i=\norm{v_i-\theta g_i}$ and $g_i=\sum_{j<i}c_{ij}\hat{v}_j+\sum_{j>i}c_{ij}v_j$.
\end{lemma}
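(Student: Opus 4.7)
The plan is to compute the objective decrease contributed by each inner coordinate update and then telescope over $i = 1, \ldots, n$, mirroring the argument already used for the step-size-free version in Lemma~\ref{lemma-sufficient}. Since $c_{ii} = 0$ by the standing convention, the part of $f(V) = \langle C, V^T V\rangle$ depending on a single $v_i$ (with all other columns held fixed, those with index $j<i$ already updated to $\hat{v}_j$) equals $2\, v_i^T g_i$ up to a constant, with $g_i$ exactly as in \eqref{eq:g}. Hence replacing $v_i$ by $\hat{v}_i$ in that inner step decreases $f$ by precisely $2(v_i - \hat{v}_i)^T g_i$. Summing these per-coordinate decreases over $i$ will give $f(V) - f(\hat{V})$, so the whole proof reduces to rewriting one inner-step decrease in the stated normalized form.

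Next, I would use the update rule \eqref{eq:MS} in its pointwise form $\hat{v}_i = (v_i - \theta g_i)/y_i$ (valid because $y_i \geq \delta > 0$ by Lemma~\ref{lemma:MSnondegenerate}) to solve for the gradient as $g_i = \theta^{-1}(v_i - y_i \hat{v}_i)$. Substituting this into $2(v_i - \hat{v}_i)^T g_i$ converts the decrease into
\begin{equation*}
\frac{2}{\theta}\,(v_i - \hat{v}_i)^T (v_i - y_i \hat{v}_i).
\end{equation*}
Expanding this inner product and using $\|v_i\| = \|\hat{v}_i\| = 1$ collapses the cross terms into $(1+y_i)(1 - v_i^T \hat{v}_i)$, and then the elementary identity $\|v_i - \hat{v}_i\|^2 = 2(1 - v_i^T \hat{v}_i)$ rewrites the factor $1 - v_i^T \hat{v}_i$ as $\tfrac{1}{2}\|v_i - \hat{v}_i\|^2$. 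Combining the constants yields $\tfrac{1+y_i}{\theta}\|v_i - \hat{v}_i\|^2$ for the per-coordinate decrease, which is exactly the summand claimed in the lemma.

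The proof is essentially a bookkeeping calculation, so there is no real obstacle, only two small things to be careful with. First, one must be precise about what ``fixed'' means in the inner loop: when evaluating the decrease from updating $v_i$, the columns $v_j$ with $j<i$ have already been replaced by $\hat{v}_j$, and this is exactly why the $g_i$ appearing in the decrease is the asymmetric quantity in \eqref{eq:g} rather than $V c_i$. Second, the identity $(v_i - \hat{v}_i)^T(v_i - y_i \hat{v}_i) = (1 + y_i)(1 - v_i^T \hat{v}_i)$ critically uses that \emph{both} $v_i$ and $\hat{v}_i$ are unit vectors, which holds for free because the normalization in \eqref{eq:MS} keeps every iterate on the sphere. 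Telescoping these per-coordinate decreases over $i = 1, \ldots, n$ then produces the stated formula for $f(V) - f(\hat{V})$.
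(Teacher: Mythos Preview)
Your proposal is correct and follows essentially the same approach as the paper: compute the per-coordinate decrease $2g_i^T(v_i-\hat v_i)$, use the update rule and the unit-norm constraints $\|v_i\|=\|\hat v_i\|=1$ to rewrite it as $\tfrac{1+y_i}{\theta}\|v_i-\hat v_i\|^2$, and telescope. The only cosmetic difference is that the paper substitutes $g_i = \tfrac{1}{\theta}v_i - \tfrac{1}{\theta}(v_i-\theta g_i)$ and splits the inner product into two pieces, whereas you solve directly for $g_i = \theta^{-1}(v_i - y_i\hat v_i)$; the resulting algebra is identical.
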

\begin{proof}
        Consider the inner iteration of the Mixing method with a step size.
        With the same analysis to Lemma~\ref{lemma-sufficient}, the function value before updating the variable $v_i$ is
        $2g_i^Tv_i$, and the function difference after updating $v_i$ to $\hat{v}_i=(v_i-\theta g_i)/y_i$ is
        \begin{align*}
                2g_i^T(v_i-\hat{v}_i) 
                &= 2(g_i+\frac{v_i-\theta g_i}{\theta})^T(v_i-\hat{v}_i) -2(\frac{v_i-\theta g_i}{\theta})^T(v_i-\hat{v}_i)\no\\
                &= 2\frac{1}{\theta}v_i^T(v_i-\hat{v}_i) - 2\frac{y_i}{\theta}\hat{v}_i^T(v_i-\hat{v_i})\no\\
                &= \frac{1+y_i}{\theta} 2(1-v_i^T\hat{v}_i)
                = \frac{1+y_i}{\theta}\norm{v_i-\hat{v_i}}^2.\no
        \end{align*}
        Thus, the result holds from summing the above equation over $i=1\ldots n$.
\end{proof}
\vspace{-0.3cm}
\paragraph{Proof of Theorem~\ref{thm:globalConvStepsize}} $ $\\
\vspace{-0.1cm}
\begin{proof}
        Similar to Lemma~\ref{lemma:Jacobian}, the Jacobian of the Mixing method $M_\theta$ with step size $\theta$ is
        \begin{equation*}
                J(V) = (D_y\otimes I_k-\theta P L\otimes I_k)^{-1} P \theta L^T\otimes I_k,\;\;\text{where }\;P=\diag(P_1,\ldots,P_n)\;\text{ and }\;P_i=I-\hat{v}_i\hat{v}_i^T.
        \end{equation*}
        By Lemma~\ref{lemma:diffproj}, the spectral radius of $J$ at a critical point is lower bounded by
        $J_{CGD}=(D_y-\theta L)^{-1} \theta L^T$, which is the Jacobian of the coordinate gradient descent method (CGD) on a linear system $S=C+D_y$. 
        Because the CGD admits a Jacobian with $\rho(J_{CGD})>1$ when $S\not\succeq 0$ \citep[Proposition 5]{lee2017first},
        it follows that all non-optimal critical points are unstable fixed points for $M_\theta$.
        Further, since the Mixing method $M_\theta$ with step size $\theta$ is a diffeomorphism by Lemma~\ref{lemma:diffeomorphism},
        we can apply the center-stable manifold theorem \citep[Theorem III.7]{shub2013global} to the mapping.
        To be specific, the corollary of center-stable manifold theorem in \citet[Theorem 2]{lee2017first}\footnote{Note
        that \citet[Lemma 1]{lee2017first} use only the property of diffeomorphism,
 	so their assumption on the non-singular Jacobian is not necessary. Actually, non-singular Jacobian is a sufficient condition 
	for the existence of one-to-one mapping but not the necessary condition.} implies that  
        the Mixing method $M_\theta$ escapes all non-optimal critical point almost surely under random initialization.\footnote{Note
	 that the critical points here is non-isolated because they are invariant to rotations in $R^k$.
	 While \citet[Theorem 2]{lee2017first} suffices for our result, interested reader can also refer to \cite{panageas2016gradient}
	 on how they use the Lindelöf lemma to solve the non-isolation issue.}
        Further, because $M_\theta$ is monotonically decreasing by Lemma~\ref{lemma:squaredDecreaseMS} and the objective value is lower bounded,
	$M_\theta$ converges to a first-order critical points (with the same analysis to Theorem~\ref{thm:critical mixing}).
        In conclusion, the almost surely divergence from the non-optimal critical points and the convergence to a critical point imply
        that the method converges to a global optimal solution almost surely.
\end{proof}

\section{Proof of Lemma~\ref{lemma:deficient}: Rank Deficiency in Critical Points}\label{sec:defficient}
The proof is a specialized version of \citep[Lemma 9]{boumal2016non} for the MAXCUT SDP, in which their $Y$ is our $V$ and their $\mu(V)$ is our $y$. We list it here for completeness.
\begin{proof}
        Let $V$ be a first-order critical point of problem \eqref{eq:unitsdp}, which means that there is a corresponding $y_i=\norm{Vc_i},\;i=1\ldots n$ such that
        \begin{equation*}
                VS = 0,\quad\text{where}\;S\equiv C+D_y.
        \end{equation*}
        This implies
        \begin{equation*}
                \textnormal{rank} (V) \leq \textnormal{null} (C+D_y) \leq \max_{\nu} \textnormal{null}(C+D_\nu).
        \end{equation*}
        Note that the right-hand side is independent of $V$, so we can use it to bound the rank of all critical $V$.
        Let $\nu$ be a solution of the right-hand side, $M\equiv C+D_\nu$, and $\textnormal{null}(M)\equiv\ell$.
        Writing $C=M-D_\nu$, we have
        \begin{equation*}
                C\in \mathcal{N}_\ell + im (D),
        \end{equation*}
        in which the $+$ denotes the set-sum, $im(D)$ denotes the image of all diagonal matrices of size $n$, and $\mathcal{N}_\ell$ denotes the set of symmetric matrices of size $n$ with nullity $\ell$.
        Because of the symmetricity of $\mathcal{N}_\ell$,
        \begin{equation*}
                \textnormal{dim}(\mathcal{N}_\ell) = \frac{n(n+1)}{2} - \frac{\ell(\ell+1)}{2}.
        \end{equation*}
        Further, because $\textnormal{rank}(V)\leq k$, we can assume that $\ell\geq k$. Union all possible $\ell$, 
        \begin{equation*}
                C\in \bigcup_{\ell=k\ldots n} \mathcal{N}_\ell + im(D).
        \end{equation*}
        Note that the right-hand side is now independent of $C$.
        Because the dimension of a finite union is at most the maximal dimension, and the dimension of a finite set sum is at most the sum of set dimensions,
        \begin{equation}\label{eq:defficient}
                \textnormal{dim}\left(\bigcup_{\ell\in k\ldots n} \mathcal{N}_\ell + im(D)\right) \leq \textnormal{dim}(\mathcal{N}_k + im(D)) \leq \frac{n(n+1)}{2} - \frac{k(k+1)}{2} + \textnormal{rank}(D).
        \end{equation}
        We know that $\textnormal{rank}(D)=n$ because the space of diagonal matrix has $n$ free dimensions.
        Because the symmetric matrix $C$ lives in the space $\frac{n(n+1)}{2}$, almost no $C$ satisfies the right-hand side of \eqref{eq:defficient} if we take large enough $k$ so that
        \begin{equation*}
                \frac{n(n+1)}{2} -\frac{k(k+1)}{2} + n < \frac{n(n+1)}{2}.
        \end{equation*}
        Thus, almost no $C$ has critical point of rank $k$ if $\frac{k(k+1)}{2}>n$, which means for almost all $C$, the critical point has at most rank $k-1$.
\end{proof}

\section{Application: Word embedding problem}\label{sec:wordemb}
The word embedding is a feature learning technique to embed the meanings of words
as low-dimensional vectors. 
For example, the popular Word2vec model
\citep{mikolov2013distributed,mikolov2013efficient} successfully embeds
similarities and analogies between words using a shallow neural network.
Another popular model, GloVe \citep{pennington2014glove}, uses
a factorization-based formulation and achieves better accuracy in
analogies tasks compared to Word2vec. 
The theoretical justifications of these two models are discussed in RANDWALK
\citep{arora2015rand}.  Here, we show that our coordinate descent approach
can also be applied to learning word embeddings with the GloVe objective function,
highlighting the fact that the Mixing methods can be applied to problems typically
considered in the domain of solely non-convex optimization.
\begin{algorithm}[t]
        \begin{algorithmic}[1]
        \State Initialize $v_i$ randomly on a unit sphere\;
        \State Initialize $b_i:=0$ for each $i=1,\ldots,m$\;
        \While{not yet converged}
                \For{$i=1,\ldots,n$}
                        \State Solve $Hd+g=0$ approximately by conjugate gradient method using
                        \State \quad$Hd := \sum_j w_{ij} (v_j^T d) v_j$ and $g := \sum_j w_{ij}e_{ij}v_j$, where
                        \State \quad$e_{ij} = v_i^T v_j + b_i + b_j - \log(c_{ij})$\;
                        \State Let $v_i := v_i + d$\;
                        \State Update bias term $b_i:= b_i - (\sum_{j}w_{ij}e_j)/(\sum_j w_{ij}^2)$\;
                \EndFor
        \EndWhile
        \end{algorithmic}
        \caption{The Mixing method for Word embedding problem}\label{alg:word-embedding}
\end{algorithm}

\paragraph{Problem description.} Let $C$ be the co-occurrence matrix such that $c_{ij}$ is the 
number of times word $i$ and $j$ co-occur within the same window in the corpus.
We consider solving a slightly modified version of the GloVe objective function \citep{pennington2014glove}
\begin{equation*} \min_{V \in \R^{k \times n}} \quad \frac{1}{2} \sum_{i\neq j}
w_{ij}\Big( v_i^Tv_j +b_i + b_j - \log c_{ij}\Big)^2,
\end{equation*} 
where $n$ is size of the vocabulary, $k$ is the number of latent factors
and $w_{ij} = \min\{C_{w,w'}^{3/4}, 100\}$ is a tuning factor to suppress 
high-frequency words. The only difference with GloVe is that we do not include the self-loop, i.e., $i=j$ terms, in the formulation.

\paragraph{Application of Mixing method.}
Focusing on the subproblem involving only variable $v_i$ and take a step $d\in\bR^k$, we can see that the subproblem $\min_d f(v_i+d)$ becomes:
\begin{equation*} \minimize_{d\in\bR^k}\; 
              \frac{1}{2} \sum_{j} w_{ij} \Big( (v_i +
d)^T(v_j) +b_i+b_j- \log c_{ij} \Big)^2
  %\label{eq:glove-subprob}
\end{equation*}
Define $e_{ij}=v_i^T v_j+b_i+b_j-\log c_{ij}$.
Then the above subproblem can be reformulated as
\begin{equation*}
        \minimize_{d\in\bR^k}\;\; \frac{1}{2}d^T\big(\overbrace{\sum_j w_{ij} v_j v_j^T}^H\big)d + \big(\overbrace{\sum_j w_{ij}e_{ij}v_j}^g\big)^T d,
\end{equation*}
which is an unconstrained quadratic program solvable in $O(n^3)$ time.
In practice, we apply
the conjugate gradient method with a stopping condition on 
$\norm{\gr_d f(v_i+d)}$ to obtain good enough approximations and
cyclically update all the $v_i$. Totally, updating all the
variables once takes $O(k\cdot m\cdot(\textnormal{\# CG iterations}))$ time,
where $m$ is the number of nonzeros in $C$ and typically $(\textnormal{\# CG iteration})<10$ in our
settings. Algorithm~\ref{alg:word-embedding} contains a complete description.

\begin{figure*}[t] \centering
        \begin{subfigure}[b]{0.4\textwidth}
        \centering
        \includegraphics[width=55mm,height=35mm]{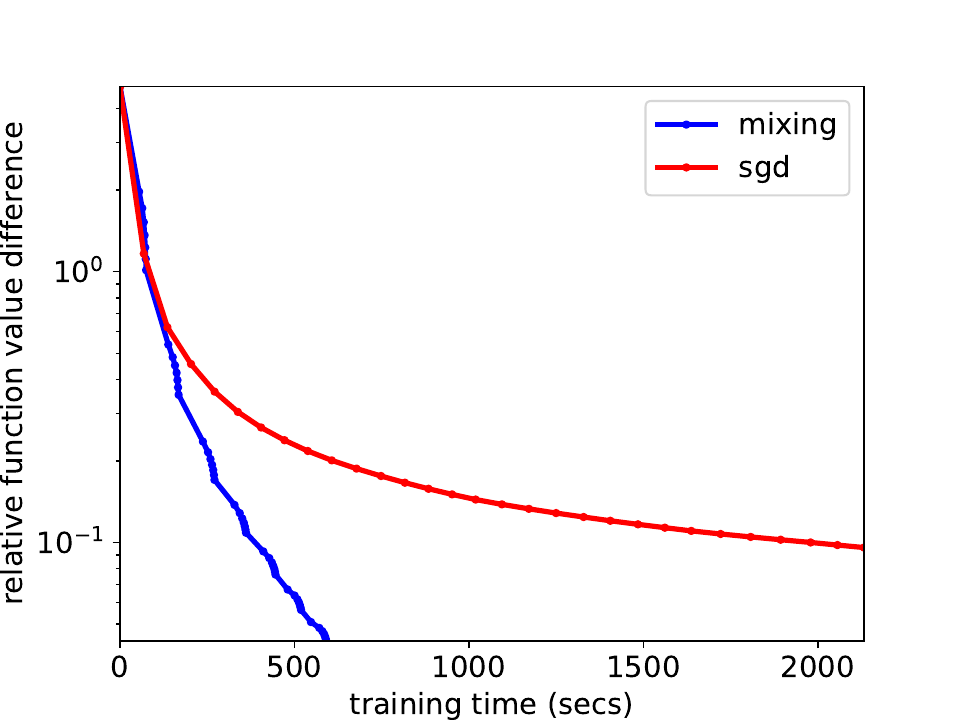}
        \caption{wiki8 (n=$1332$, nnz=$1678046$, k=$50$)}
        \end{subfigure}
        \hfill
        \begin{subfigure}[b]{0.45\textwidth}
        \centering
        \includegraphics[width=55mm,height=35mm]{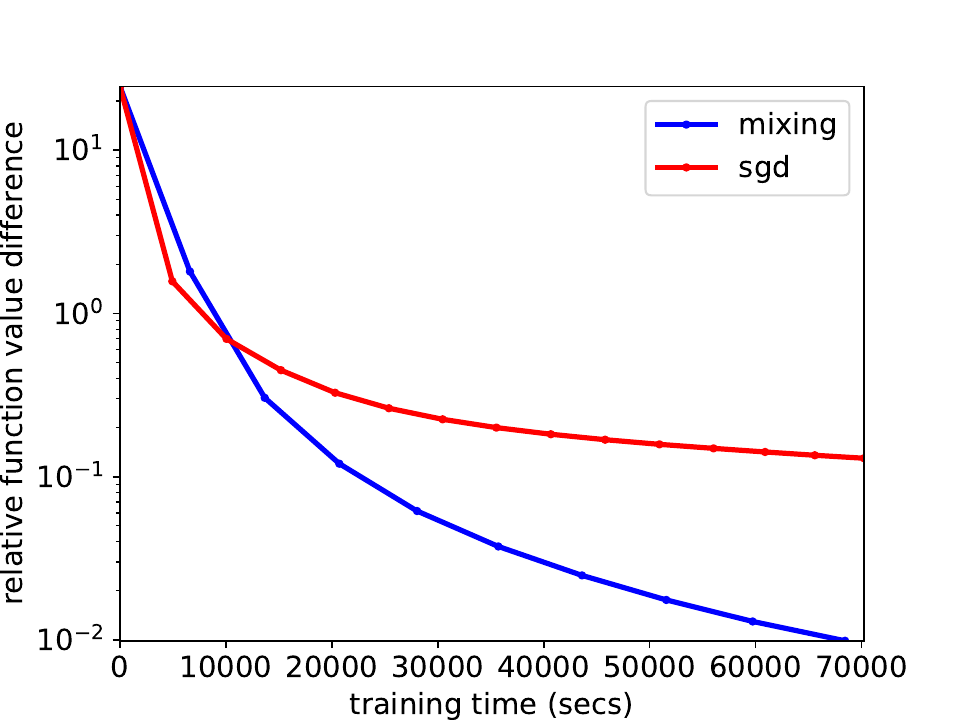}
        \caption{enwiki (n=$75317$, nnz=$875447516$, k=$300$)}
        \end{subfigure}
        \caption{$(f-f_{\min})$ v.s. training time for the word embedding problem, where $f_{\min}$ is minimum objective value we have.
        The experiments show that the Mixing method can be generalized to nonlinear objective function and is faster than SGD.}
        \label{fig:wordcost}
\end{figure*}

\paragraph{Results.}
Figure~\ref{fig:wordcost} shows the result of comparing the proposed Mixing
method with the stochastic gradient method, which is the default solver for GloVe.
We consider the wiki8 and the enwiki corpus, which are widely-used benchmarks for word
embeddings. The corpus is pre-processed following
\cite{pennington2014glove} (removing 
non-textual elements, sentence splitting, 
and tokenization), and words that appeared fewer than $1000$ times in the corpus
are ignored. Figure
\ref{fig:wordcost} shows the results of the Mixing method versus the SGD on the two
corpora.  For both datasets, the Mixing method converges substantially faster, achieving a lower
function value than the SGD method.

\end{document}